\numberwithin{equation}{section}
\newtheorem{theorem}{Theorem}[section]
\newtheorem{lemma}[theorem]{Lemma}
\theoremstyle{definition}
\theoremstyle{remark}
\newtheorem{remark}[theorem]{Remark}
\newcommand{\E}{\mathbb{E}}
\newcommand{\R}{\mathbb{R}}
\def\dx{\operatorname{d}}
\begin{document}

	\title[Non-uniform Berry--Esseen bounds for exchangeable pairs]{Non-uniform Berry--Esseen bounds for exchangeable pairs with applications to the mean-field classical $N$-vector models and Jack measures}

\author[L. V. Th\`{a}nh]{L\^{e} V\v{a}n Th\`{a}nh}
\address{Department of Mathematics, Vinh University, 182 Le Duan, Vinh, Nghe An, Vietnam}
\email{levt@vinhuni.edu.vn}

\author{Nguyen Ngoc Tu}
\address{Faculty of Applied Sciences,  HCMC University of Technology and Education, 01 Vo Van Ngan, Ho Chi Minh City, Viet Nam}
\email{tunn@hcmute.edu.vn}

\subjclass[2020]{Primary 60F05}

\keywords{Stein's method, Exchangeable pair, Kolmogorov distance, Non-uniform bound, Mean-field  classical $N$-vector model, Jack measure}

\date{}

\begin{abstract}
	This paper establishes a non-uniform Berry--Esseen bound in normal approximation for exchangeable pairs using Stein's method via a 
	concentration inequality approach. The main theorem extends and improves several results in the literature, including those of
	Eichelsbacher and L\"{o}we [\href{https://projecteuclid.org/journals/electronic-journal-of-probability/volume-15/issue-none/Steins-Method-for-Dependent-Random-Variables-Occuring-in-Statistical-Mechanics/10.1214/EJP.v15-777.full}{Electron. J. Probab. 15, 2010, 962--988}], and Eichelsbacher [\href{https://arxiv.org/abs/2404.07587}{arXiv:2404.07587, 2024}]. 
		The result is applied to obtain a non-uniform Berry--Esseen bound for the squared-length of the total spin in the mean-field classical $N$-vector models, and a
	non-uniform Berry--Esseen bound for Jack deformations of the character ratio. 
\end{abstract}

\maketitle

		\section{Introduction and the main result}\label{sec.intro}

Stein's method was introduced by Charles Stein \cite{stein1986approximate} to
estimate the distance between a statistic of interest and the
normal distribution. This method is of particular interest because it works well for dependent random variables 
and has applications in various areas of mathematics such as
combinatorics \cite{fulman2004stein,fulman2006inductive}, number theory \cite{chen2023generalized,harper2009two},
and random matrix and random graph theory \cite{lambert2019quantitative, pekoz2013degree}, among others. 
We refer to Stein \cite{stein1986approximate} and Chen et al. \cite{chen2011normal}
for comprehensive expositions on Stein's method for normal approximation.

Recall that
$(W, W')$ is an \textit{exchangeable pair} if two random vectors $(W,W')$ and $(W',W)$ have the same distribution. 
Let $(W, W')$ be an exchangeable pair. In \cite[Theorem 1, Lecture III]{stein1986approximate}, Stein proved a normal approximation for $W$ under the linear regression condition
\begin{equation}\label{linear}
	\E(W'|W) = (1-\tau)W
\end{equation}
for some $0<\tau<1$. The linear regression condition \eqref{linear} can be relaxed to a more flexible condition that
\begin{equation}\label{nonlinear}
	\E(W'|W) = (1-\tau)W+R
\end{equation}
for some random variable $R=R(W)$ and for some $0<\tau<1$. 
Eichelsbacher and L\"{o}we \cite{eichelsbacher2010stein} proved that if $(W,W')$ is an exchangeable pair satisfying \eqref{nonlinear},
then for any $a>0$
\begin{equation}\label{el_main}  
	\begin{aligned}  
		\sup_{z\in\mathbb{R}}|\mathbb{P}(W\le z) - \Phi(z)| &\le \sqrt{\E\left( 1- \dfrac{1}{2\tau}\E(\Delta^2|W)\right)^2}+\left(\dfrac{\sqrt{2\pi}}{4}+1.5a\right)\dfrac{\sqrt{\E R^2}}{\tau}\\
		&\quad+
		1.5a + \dfrac{0.41a^3}{\tau}+
		\dfrac{1}{2\tau}\E\left(\Delta^2 \mathbf{1}(|\Delta|>a)\right),
	\end{aligned}
\end{equation}
where $\Delta=W-W'$. Here and hereafter, $\Phi(z)=\left(1/\sqrt{2\pi}\right)\int_{-\infty}^z e^{-t^2/2}\dx t$ is the distribution function of the standard normal distribution.

Besides dealing with uniform bounds, some authors also used Stein's method to provide non-uniform bounds or moderate deviations.
We refer to Chen and Shao \cite{chen2001non} for non-uniform bounds in normal approximation with
independent summands, and to Chen et al. \cite{chen2013stein}, Fang et al. \cite{fang2020refined}, and
Liu and Zhang \cite{liu2023cramer}
for the Cram\'{e}r-type moderate deviations.
Chen et al. \cite{chen2021error} and Eichelsbacher \cite{eichelsbacher2024stein} used Stein's method to obtain non-uniform Berry--Esseen bounds
for zero-biased couplings and exchangeable pairs, respectively. Very recently, in Butzek and Eichelsbacher \cite{butzek2024non} and Dung et al. \cite{dung2024non},
the authors
obtained non-uniform Berry--Esseen bounds for normal approximations by the Malliavin--Stein method. The following result is Theorem 2.3 of Eichelsbacher \cite{eichelsbacher2024stein}. 

\begin{theorem}[Eichelsbacher  \cite{eichelsbacher2024stein}]\label{thm.eich}
	Let  $(W,W')$ be an exchangeable pair satisfying \eqref{nonlinear}, and let $\Delta=W-W'$. Assume that $\E W^{2}\le 2$ and $|\Delta|\le a$ for some $a>0$. Then for any $z\in\mathbb{R}$, we have
	\begin{equation}\label{eiche03}  
		\begin{split}  
			|\mathbb{P}(W\le z) - \Phi(z)|&\le \dfrac{C}{1+|z|}\left(\E\left|1- \dfrac{1}{2\tau} \E(\Delta^2|W)\right|+\dfrac{\E |R|}{\tau}+3a\right),
		\end{split}
	\end{equation} 
	where $C$ is a constant depending only on $\E W^2$.
\end{theorem}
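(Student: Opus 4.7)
The plan is to split into the ranges $|z|\le 1$ and $|z|>1$. In the former, $(1+|z|)^{-1}\ge 1/2$, so a uniform bound of the same $L^1$-shape as the right-hand side of \eqref{eiche03} suffices; this is obtained by repeating the exchangeable-pair argument of Eichelsbacher--L\"owe \cite{eichelsbacher2010stein} while replacing each Cauchy--Schwarz step by a direct absolute-value bound, producing $\E|1-\E(\Delta^2|W)/(2\tau)|$ and $\E|R|$ in place of the $L^2$ analogues in \eqref{el_main}. By the symmetry $(W,W',z)\mapsto(-W,-W',-z)$, the remaining task is to show $(1+z)|\mathbb{P}(W\le z)-\Phi(z)|\le C\eta$ for $z>1$, where $\eta$ denotes the bracketed expression in \eqref{eiche03}.

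For $z>1$ I would fix the bounded Stein solution $f_z$ of $f_z'(w)-wf_z(w)=\mathbf{1}(w\le z)-\Phi(z)$, given explicitly by $f_z(w)=\sqrt{2\pi}\,e^{w^2/2}\Phi(w\wedge z)(1-\Phi(w\vee z))$. Combining this formula with the Gaussian tail bound $1-\Phi(z)\le\phi(z)/z$ yields the non-uniform estimate $|f_z(w)|\le C/(1+z)$ uniformly in $w\in\mathbb{R}$, alongside the classical bounds $|wf_z(w)|\le 1$ and $|f_z'(w)|\le 1$; these are the estimates that supply the $(1+z)^{-1}$ prefactor.

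Using the antisymmetry $\E[\Delta(f_z(W)+f_z(W'))]=0$, the regression \eqref{nonlinear} (which gives $\E(\Delta|W)=\tau W-R$), and the representation $f_z(W)-f_z(W')=\Delta\int_0^1 f_z'(W-(1-t)\Delta)\,dt$, I would derive the decomposition
\begin{equation*}
\mathbb{P}(W\le z)-\Phi(z) = \E\!\Bigl[f_z'(W)\Bigl(1-\tfrac{1}{2\tau}\E(\Delta^2|W)\Bigr)\Bigr] - \tfrac{1}{\tau}\E[Rf_z(W)] - \tfrac{1}{2\tau}\E\!\Bigl[\Delta^2\!\int_0^1 (f_z'(W-(1-t)\Delta)-f_z'(W))\,dt\Bigr].
\end{equation*}
The remainder term is immediately controlled by $|f_z|\le C/(1+z)$, yielding the $(1+z)^{-1}\E|R|/\tau$ contribution. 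For the variance term I substitute $f_z'(W)=Wf_z(W)+\mathbf{1}(W\le z)-\Phi(z)$ and bound each resulting piece using the non-uniform bound on $f_z$, the Gaussian tail $1-\Phi(z)\le C/(1+z)$, and the Markov estimate $\mathbb{P}(|W|\ge z/2)\le 8/z^2$ (valid since $\E W^2\le 2$), producing a contribution of order $C(1+z)^{-1}\E|1-\E(\Delta^2|W)/(2\tau)|$.

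The main obstacle is the Taylor term, because $f_z'$ has a unit jump at $w=z$ and is not globally Lipschitz across that point. I would split the integrand into a regular piece, on which $W$ and $W-(1-t)\Delta$ lie on the same side of $z$---where a Lipschitz-type bound on $f_z'$ combined with $|\Delta|\le a$ and the Markov tail of $W$ yields a contribution of order $Ca/(1+z)$---and a singular piece supported on the event $\{|W-z|\le a\}$ where the jump is crossed. For the singular piece, the key identity $\E[\Delta^2 \mathbf{1}(|W-z|\le a)]=2\tau\,\E[\mathbf{1}(|W-z|\le a)\cdot \E(\Delta^2|W)/(2\tau)]$ absorbs the spurious $1/\tau$ factor, after which the residual probability $\mathbb{P}(|W-z|\le a)$ is compared against the Gaussian measure of $[z-a,z+a]$ (at most $2a\phi(z)$, exponentially small for $z>1$). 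This delicate step is the source of the summand $3a$ in \eqref{eiche03}; assembling all three contributions together with the case $|z|\le 1$ yields the claimed inequality with a constant $C$ depending only on $\E W^2$.
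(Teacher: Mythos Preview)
The paper does not contain its own proof of this statement: Theorem~\ref{thm.eich} is quoted from Eichelsbacher~\cite{eichelsbacher2024stein}, with the sole remark that ``the proof \ldots\ is quite simple.'' So there is no in-paper argument to compare against, and I can only comment on whether your outline would go through.

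Your decomposition into the three Stein terms and the bound $\|f_z\|_\infty\le C/(1+z)$ are fine; the remainder term $T_2$ is handled correctly. The difficulties lie in the other two terms. For the variance term $T_1=\E[f_z'(W)Y]$ with $Y=1-\E(\Delta^2|W)/(2\tau)$, after the substitution $f_z'(W)=Wf_z(W)+\mathbf{1}(W\le z)-\Phi(z)$ you are left with pieces like $\E[\mathbf{1}(W>z)\,|Y|]$ and $\tfrac{1}{z}\E[|W|\,|Y|]$. Neither of these is bounded by $C\E|Y|/(1+z)$ using only $\E W^2\le 2$ and Markov's inequality: the indicator $\mathbf{1}(W>z)$ does not factor through $|Y|$, and $\E[|W|\,|Y|]$ cannot be controlled by $\E|Y|$ alone without a Cauchy--Schwarz step that would replace $\E|Y|$ by $\sqrt{\E Y^2}$ (which is the $L^2$ quantity appearing in Theorem~\ref{thm.main11}, not the $L^1$ quantity in \eqref{eiche03}). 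Your sketch does not explain how to avoid this.

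The singular piece of the Taylor term has the same defect. Your identity $\frac{1}{2\tau}\E[\Delta^2\mathbf{1}(|W-z|\le a)]=\E[\mathbf{1}(|W-z|\le a)\,\E(\Delta^2|W)/(2\tau)]$ is correct, but the right-hand side is $\mathbb{P}(|W-z|\le a)-\E[Y\,\mathbf{1}(|W-z|\le a)]$, and the second summand is only bounded by $\E|Y|$, with no $(1+z)^{-1}$ decay. Comparing $\mathbb{P}(|W-z|\le a)$ to $\Phi(z+a)-\Phi(z-a)$ via the uniform Berry--Esseen bound likewise introduces an additive error of order $\eta$ (the bracketed quantity), again without the $(1+z)^{-1}$ factor. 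So the claim that this step produces exactly the ``$3a$'' contribution is not justified by the argument you describe; a genuine concentration estimate for $\E[\Delta^2\mathbf{1}(z-a\le W\le z+a)]$ of the type in Lemma~\ref{lem.concentration.inequality} (specialised to $p=1$) appears to be what is actually needed.
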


The proof of Theorem \ref{thm.eich} is quite simple and 
the term $1/(1+|z|)$ in \eqref{eiche03} can be improved to $1/(1+|z|^p)$ 
under the condition that $\E|W|^{2p}<\infty$ for $p \ge 1$. 
However, if the remainder $R$ appears as in \eqref{nonlinear} and $\Delta$ is not bounded, the problem
becomes more challenging (see, Eichelsbacher \cite[Remark 2]{eichelsbacher2024stein}).
In many applications, the exchangeable pair $(W,W')$
only satisfies \eqref{nonlinear} rather than \eqref{linear}, and $|\Delta|$ may not be bounded. 
In this paper, we obtain a sharp non-uniform
Berry--Esseen bound for unbounded exchangeable pairs under condition \eqref{nonlinear}. 
Our approach uses a non-uniform concentration inequality, which differs from the aforementioned papers on non-uniform Berry--Esseen bounds.
Moreover, in our result, the dependence on $\E|W|^{2p}$ is explicit.
The main result of the paper is the following theorem.

\begin{theorem} \label{thm.main11}
	Let $p\ge 1$ and  $a>0$. Let $(W,W')$ be an exchangeable pair satisfying \eqref{nonlinear}, and let $\Delta=W-W'$.
	Then for every $z\in\mathbb{R}$, we have
	\begin{equation*}\label{nonuni.main}
		\begin{split}  
			&|\mathbb{P}(W\le z) - \Phi(z)|\\
			&\quad\le \dfrac{C(p)\left(1+\E|W|^{2p}\right)}{(1+|z|)^p}\left(\sqrt{\E\left(1- \dfrac{1}{2\tau} \E(\Delta^2|W)\right)^2 } +\dfrac{\sqrt{\E R^2}}{\tau}+a+\dfrac{a^3}{\tau}+\dfrac{\sqrt{\E\Delta^4\mathbf{1}(|\Delta|>a)}}{\tau}\right),
		\end{split}
	\end{equation*}
	where $C(p)$ is a constant depending only on $p$.
\end{theorem}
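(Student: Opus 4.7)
The plan is to execute Stein's method for exchangeable pairs while tracking every estimate non-uniformly in $z$, so as to extract the factor $(1+|z|)^{-p}$ from the decay of the Stein solution and Markov-type tail bounds on $W$. First I reduce to the case $|z|>1$: for $|z|\le 1$ the factor $(1+|z|)^{-p}$ is bounded below by $2^{-p}$, so the conclusion follows directly from the Eichelsbacher--L\"{o}we uniform bound \eqref{el_main}, since term by term the right-hand side of \eqref{el_main} is dominated, up to constants, by that of Theorem \ref{thm.main11} (using, for instance, Cauchy--Schwarz to bound $\E[\Delta^2\mathbf{1}(|\Delta|>a)]$ by $\sqrt{\E\Delta^4\mathbf{1}(|\Delta|>a)}$). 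By replacing $W$ with $-W$ if necessary I may then assume $z>1$.

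For $z>1$, let $f_z$ denote the standard bounded solution of Stein's equation $f'(w)-wf(w)=\mathbf{1}(w\le z)-\Phi(z)$. Exchangeability of $(W,W')$ together with \eqref{nonlinear} gives
\[
\bbP(W\le z)-\Phi(z)=\E\!\left[f_z'(W)\Bigl(1-\tfrac{\E(\Delta^2|W)}{2\tau}\Bigr)\right]-\tfrac{1}{2\tau}\E\!\left[\Delta\!\int_0^\Delta(f_z'(W-t)-f_z'(W))\,dt\right]-\tfrac{1}{\tau}\E[R\,f_z(W)];
\]
label the three terms $T_0,T_1,T_2$. The engine producing the non-uniform factor is the $L^2$ estimate
\[
\sqrt{\E f_z(W)^2}+\sqrt{\E f_z'(W)^2}\le \frac{C(p)(1+\E|W|^{2p})}{(1+z)^p},
\]
which follows from the classical facts that $|f_z|$ and $|f_z'|$ are of order $1-\Phi(z)$ on $\{w\le z/2\}$ (hence exponentially small in $z$) while being uniformly bounded by $1$ elsewhere, combined with Markov's inequality $\bbP(W>z/2)\le 2^{2p}\E|W|^{2p}/z^{2p}$ on the complement.

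Cauchy--Schwarz with this $L^2$ estimate handles $T_0$ and $T_2$ immediately, producing the $\sqrt{\E(1-\E(\Delta^2|W)/(2\tau))^2}$ and $\sqrt{\E R^2}/\tau$ contributions, each multiplied by the desired non-uniform factor. For $T_1$ I split according to $|\Delta|\le a$ versus $|\Delta|>a$. On $\{|\Delta|>a\}$, using the pointwise bound $|f_z'(W-t)-f_z'(W)|\le 2\sup_{s\in[W\wedge W',W\vee W']}|f_z'(s)|$ and Cauchy--Schwarz yields $\tfrac{1}{\tau}\sqrt{\E\Delta^4\mathbf{1}(|\Delta|>a)}$ multiplied by the $L^2$-norm of this supremum, which inherits the non-uniform decay via the same exponential-plus-Markov decomposition extended to the supremum. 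On $\{|\Delta|\le a\}$, decompose $f_z'(W-t)-f_z'(W)$ into its absolutely continuous part, which is $O(|t|)$ and produces the $a$ and $a^3/\tau$ terms after another Cauchy--Schwarz against the $L^2$ estimate, and its jump contribution $-\mathbf{1}(z\in[W-t,W])$, which is controlled by a non-uniform concentration inequality of the form $\bbP(|W-z|\le a)\le C(p)(1+\E|W|^{2p})(\text{Stein errors}+a)/(1+|z|)^p$.

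The main obstacle will be establishing this non-uniform concentration inequality in the general setting \eqref{nonlinear} with possibly unbounded $\Delta$. I would prove it by a second application of Stein's method with a smoothed indicator of $[z-a,z+a]$ as the test function, propagating the same exponential-plus-Markov decomposition through the solution of the corresponding Stein equation. This concentration inequality, together with the $L^2$-norm-on-sup bound used in $T_1^{>a}$, is precisely what forces the weaker truncated-fourth-moment remainder $\sqrt{\E\Delta^4\mathbf{1}(|\Delta|>a)}/\tau$ in place of Eichelsbacher--L\"{o}we's truncated second moment, and constitutes the technical heart of the argument.
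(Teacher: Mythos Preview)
Your plan is essentially the paper's proof: reduce to large $z$ via \eqref{el_main}, decompose via the Stein equation, handle the first two terms by Cauchy--Schwarz against the $L^2$ bounds on $f_z(W)$ and $f_z'(W)$ (the paper's Lemma~\ref{lem.bound.solutions1}), split the remainder by $|\Delta|\lessgtr a$, and invoke a non-uniform concentration inequality for the jump contribution. One correction to your sketch, however, matters for closing the argument: the concentration inequality actually needed (Lemma~\ref{lem.concentration.inequality}) is for $\E\bigl[\Delta^2\mathbf{1}(|\Delta|\le a)\mathbf{1}(|W-z|\le a)\bigr]$, with bound $C(p)(1+\E|W|^{2p})\,a(\tau+\sqrt{\E R^2})/(1+z)^p$ --- note the factor $a\tau$, which after division by $2\tau$ yields the bare $a$ term. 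Your stated version, a bound on $\bbP(|W-z|\le a)$ followed by $\Delta^2\le a^2$, would leave an extra factor $a^2/\tau$ in front of the Stein errors that cannot be absorbed without an additional hypothesis. The paper proves this weighted inequality not by solving a second Stein equation but by applying the exchangeable-pair identity \eqref{s} directly to the test function $f(x)=(1+x+2a)^p\bigl((x-z+2a)_+\wedge 4a\bigr)$, which simultaneously produces the $\Delta^2$ weight on the left (from $f'\ge(1+z)^p$ on $[z-2a,z+2a]$) and the right-hand side via Cauchy--Schwarz. For the absolutely continuous part on $\{|\Delta|\le a\}$, the paper likewise does not use Cauchy--Schwarz against an $L^2$ estimate but an $L^1$ bound $\E g_z(W+u)\le C(p)(1+\E|W|^{2p})/(1+z)^p$ with $g_z=(wf_z)'$ (Lemma~\ref{lem.bsg}), exploiting $g_z\ge 0$; this is what gives the clean $a^3/\tau$.
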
 
We apply Theorem \ref{thm.main11} to obtain a non-uniform Berry--Esseen bound in the central limit theorem
for the squared-length of the total spin of the mean-field classical $N$-vector models and a non-uniform Berry--Esseen bound in the central limit theorem
for Jack deformations of the character ratio. 
The former improves the main result of Th\`{a}nh and Tu \cite{thanh2019error}, 
while the latter recovers a result of Chen et al. \cite{chen2021error} by a different method.

It is worth noting that we do not assume $|W-W'|\le a$ as in Theorem \ref{thm.eich},  thereby extending the applicability of our result to unbounded exchangeable pairs
(see Subsection \ref{subsec.jack} for the case of the Jack deformations of the character ratio).

Throughout this paper, $C$ denotes a universal constant whose value may change from line to line. 
The symbol $C(\cdot)$ denotes a constant that depends only on the variables inside the parentheses, and its value may also change from line to line.
For $x\in\R$, the natural logarithm (base $e=2.7182\ldots$) of $\max\{x,e\}$ will be denoted by $\log x$.
The supremum norm of a function $f$ is denoted by $\|f\|$.

The rest of the paper is organized as follows.
In Section \ref{sec.proof}, we provide four preliminary lemmas and the proof of Theorem \ref{thm.main11}.
Non-uniform Berry--Esseen bounds for the squared-length of the total spin in the mean-field classical $N$-vector models and Jack measures are presented in Section \ref{sec.appl}.
Finally, Appendix \ref{sec.proof.lem} includes a remark on the Jack measures and the proofs of four preliminary lemmas.

\section{Proof of main result}\label{sec.proof}

In this section, we will prove the main result of the paper. 
Throughout, we use all notations as defined in Theorem \ref{thm.main11}.
If $\E|W|^{2p}=\infty$, then the result is trivial. Therefore,
it suffices to consider the case $\E|W|^{2p}<\infty$.
For $z\in\R$, let $f_z$ be the solution to the Stein equation 
\begin{equation}\label{s3} 
	f'(x) - xf(x) = \mathbf{1}(x \le z) - \Phi(z). 
\end{equation}

We will need the following four lemmas. 
The proofs of these lemmas
are deferred to the Section \ref{sec.proof.lem}.
In Lemmas \ref{lem.bound.moment.W} and \ref{lem.bsg}, the number $100$ does not play any special role.	

\begin{lemma}\label{lem.bound.moment.W} 
	Let $0<q\le 2p$ and let $|c|\le 100$. Then
	\begin{equation}\label{estimate.moment.W01}
		\E|W+c|^q\le C(p)(1+\E|W|^{2p}).
	\end{equation}
\end{lemma}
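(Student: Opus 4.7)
The plan is to reduce the estimate to a bound on a single absolute moment of $W$ and then invoke monotonicity of $L^q$ norms. Starting from the triangle inequality $|W+c|\le |W|+|c|\le |W|+100$, I would apply the elementary inequality $(x+y)^q\le 2^{(q-1)_+}(x^q+y^q)$ valid for $x,y\ge 0$ and $q>0$, yielding $|W+c|^q\le C(q)(|W|^q+100^q)$. Since $0<q\le 2p$, the constant $C(q)$ and the term $100^q$ are both bounded by constants depending only on $p$.

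After taking expectations, the problem reduces to controlling $\E|W|^q$ by $1+\E|W|^{2p}$. For this I would use Lyapunov's inequality (equivalently, Jensen applied to the concave function $x\mapsto x^{q/(2p)}$ on $[0,\infty)$), which gives $\E|W|^q\le (\E|W|^{2p})^{q/(2p)}$. Splitting into the cases $\E|W|^{2p}\ge 1$ (where the exponent $q/(2p)\le 1$ makes the right-hand side $\le \E|W|^{2p}$) and $\E|W|^{2p}<1$ (where the right-hand side is $\le 1$), one obtains $\E|W|^q\le 1+\E|W|^{2p}$ in both cases.

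Combining these two steps gives the claim with a constant $C(p)$ depending only on $p$. I do not anticipate any real obstacle: the lemma is a bookkeeping statement that packages the moment comparison used repeatedly in the proof of Theorem~\ref{thm.main11}, and the only subtlety is handling $0<q\le 1$ and $q\ge 1$ uniformly through the factor $2^{(q-1)_+}$.
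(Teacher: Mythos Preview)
Your proposal is correct and follows essentially the same route as the paper: both arguments use the pointwise inequality $|W+c|^q\le \max\{1,2^{q-1}\}(|W|^q+|c|^q)$, then Lyapunov's inequality to compare $\E|W|^q$ with $(\E|W|^{2p})^{q/(2p)}$, and finally the same case split on whether $\E|W|^{2p}$ exceeds $1$.
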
 

The following lemma is a non-uniform concentration inequality for $W$, which plays an important role in the proof of Theorem \ref{thm.main11}.
For uniform concentration inequalities for exchangeable pairs, we refer to Chen and Fang \cite{chen2015error}, Eichelsbacher and L\"{o}we \cite{eichelsbacher2010stein}, 
and Shao and Su \cite{shao2006berry}.

\begin{lemma}\label{lem.concentration.inequality} 
	Let $z \ge 0$. Then for $0<a< 1$, we have
	\begin{align} \label{T322a}
		\E \left(\Delta^2\mathbf{1}(|\Delta|\le a)\mathbf{1}(z-a \le W \le z+a)\right) \le \dfrac{C(p)(1+\E|W|^{2p})a(\sqrt{\E R^2} + \tau)}{(1+z)^p}.
	\end{align}
\end{lemma}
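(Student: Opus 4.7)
The plan is to run Stein's method with a carefully chosen non-decreasing test function $f$ whose support forces $W$ into a tail of $z$; this provides the desired factor $(1+z)^{-p}$ via Markov-type tail estimates.

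Concretely, I would take the piecewise-linear ramp
\[
f(w) = \begin{cases} 0, & w \le z - 2a,\\ w-(z-2a), & z-2a \le w \le z+2a,\\ 4a, & w \ge z+2a, \end{cases}
\]
so that $f$ is non-decreasing with $\|f\| \le 4a$ and $f(w)=0$ whenever $w\le z-2a$. Monotonicity yields $(W-W')(f(W)-f(W')) \ge 0$ almost surely. On the event $E = \{|\Delta|\le a,\ W\in[z-a,z+a]\}$ we also have $W' \in [z-2a, z+2a]$, so $f$ has slope $1$ between $W'$ and $W$ and the product equals $\Delta^2$. Hence
\[
\E\left[\Delta^2 \mathbf{1}(E)\right] \le \E[(W-W')(f(W)-f(W'))].
\]
A standard exchangeability computation combined with \eqref{nonlinear} then rewrites the right-hand side as $2\tau\E[Wf(W)] - 2\E[Rf(W)]$.

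The task reduces to bounding these two expectations. Since $f(W)\neq 0$ only on $\{W>z-2a\}$, both are restricted to a tail of $W$. For $z \ge 4$ (so that $z-2a\ge z/2$ since $0<a<1$), Markov's inequality gives
\[
\E[|W|\,\mathbf{1}(W>z-2a)] \le \frac{C(p)\,\E|W|^{2p}}{z^{2p-1}}, \qquad \bbP(W>z-2a) \le \frac{C(p)\,\E|W|^{2p}}{z^{2p}}.
\]
Using $\|f\|\le 4a$ for the $W$-term and Cauchy--Schwarz together with $\sqrt{x}\le 1+x$ for the $R$-term, and noting that $2p-1\ge p$ for $p\ge 1$, both contributions acquire the required $(1+z)^{-p}$ decay multiplied by $C(p)\,a\,(\tau+\sqrt{\E R^2})\,(1+\E|W|^{2p})$. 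For $z \le 4$ the factor $(1+z)^p$ is bounded by a constant depending only on $p$, and one uses only $\|f\|\le 4a$ together with the trivial bounds $\E|W|\le 1+\E|W|^{2p}$ and $\E|R|\le\sqrt{\E R^2}$; in this regime the estimate reduces to a (weighted) uniform concentration inequality.

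I expect the main subtlety to be the small-$z$ regime, where $\{W>z-2a\}$ is no longer a genuine tail event and the truncation built into $f$ buys nothing. I would handle this by splitting the analysis at $z = 4$, treating $z \le 4$ by the uniform argument and $z \ge 4$ by the tail argument above, then combining the two regimes into a single $p$-dependent constant $C(p)$.
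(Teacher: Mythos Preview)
Your proof is correct and follows the same overall strategy as the paper: apply the exchangeability identity \eqref{s} to a non-decreasing ramp function supported on $[z-2a,\infty)$, lower-bound the left-hand side by the desired concentration quantity, and upper-bound the right-hand side using $\|f\|\le C a$. The difference lies in how the factor $(1+z)^{-p}$ is produced. The paper builds the weight directly into the test function, taking
\[
f(x) = (1+x+2a)^p\bigl(x-(z-2a)\bigr)\quad\text{on }[z-2a,z+2a],
\]
so that $f'(x)\ge (1+z)^p$ there; the factor then appears in the lower bound on the left-hand side, and the right-hand side is handled uniformly in $z$ via Cauchy--Schwarz and Lemma~\ref{lem.bound.moment.W}, with no case split. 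You instead keep the unweighted linear ramp and extract $(1+z)^{-p}$ from Markov-type tail bounds on $\{W>z-2a\}$, which forces the split at $z=4$. Both routes give the same bound; the paper's is a bit cleaner (one argument for all $z\ge 0$), while yours is arguably more elementary because the test function is simpler and the monotonicity check is trivial.
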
 

Lemmas \ref{lem.bound.solutions1} and \ref{lem.bsg} are similar to Lemmas 5.1 and 5.2 of Chen and Shao \cite{chen2001non}, respectively.

\begin{lemma}\label{lem.bound.solutions1} 
	Let $z \ge 5$ and let $\xi$ be a random variable satisfying $0\le |\xi|\le |\Delta|$. Then
	\begin{equation}\label{estimate.lem1.01}
		\E(f_{z}(W))^2 \le \dfrac{C(p)(1+\E|W|^{2p})}{(1+z)^{2p}}
	\end{equation}
	and
	\begin{equation}\label{estimate.lem1.02}
		\E(f_{z}'(W+\xi))^2 \le \dfrac{C(p)(1+\E|W|^{2p})}{(1+z)^{2p}}.
	\end{equation}
\end{lemma}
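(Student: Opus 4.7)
The plan is to use the standard explicit formulas
\[
f_z(x)=\sqrt{2\pi}\,e^{x^2/2}\Phi(x)(1-\Phi(z))\ \text{ for } x\le z,\qquad f_z(x)=\sqrt{2\pi}\,e^{x^2/2}\Phi(z)(1-\Phi(x))\ \text{ for } x>z,
\]
the identity $f_z'(x)=xf_z(x)+\mathbf{1}(x\le z)-\Phi(z)$ obtained from \eqref{s3}, the Mills-type tail bound $1-\Phi(z)\le e^{-z^2/2}/(z\sqrt{2\pi})$ valid for $z>0$, and the standard universal bounds $\|f_z\|\le\sqrt{2\pi}/4$ and $\|f_z'\|\le 1$. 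The common strategy for both \eqref{estimate.lem1.01} and \eqref{estimate.lem1.02} is to split the expectation according to whether the argument of $f_z$ (respectively $f_z'$) lies in the bulk region $\{|\cdot|\le z/2\}$ or in the tail $\{|\cdot|>z/2\}$: in the bulk the explicit formulas yield super-polynomial (in fact Gaussian) decay in $z$, while in the tail the universal bound combined with Markov's inequality converts the $2p$-th moment hypothesis into the factor $(1+z)^{-2p}$.

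For \eqref{estimate.lem1.01} I would write $\E(f_z(W))^2=\E(f_z(W))^2\mathbf{1}(|W|\le z/2)+\E(f_z(W))^2\mathbf{1}(|W|>z/2)$. On the bulk event $\{|W|\le z/2\}$ we have $W\le z$, and combining $\Phi(W)\le 1$ with the Mills bound gives $f_z(W)\le e^{(W^2-z^2)/2}/z\le e^{-3z^2/8}/z$, which for $z\ge 5$ is absorbed into $C(p)/(1+z)^{2p}$. On the tail, $(f_z(W))^2\le \pi/8$ and $\bbP(|W|>z/2)\le 2^{2p}\E|W|^{2p}/z^{2p}$, and the elementary inequality $(1+z)/z\le 6/5$ for $z\ge 5$ delivers the desired bound.

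For \eqref{estimate.lem1.02} I set $Y=W+\xi$ and split in the same way. On $\{|Y|\le z/2\}$ we have $Y\le z$, so $f_z'(Y)=Yf_z(Y)+(1-\Phi(z))$; the estimate $|Yf_z(Y)|\le (z/2)\cdot e^{-3z^2/8}/z$ from the previous step, together with $1-\Phi(z)\le e^{-z^2/2}/(z\sqrt{2\pi})$, shows $(f_z'(Y))^2$ is exponentially small in $z$ on this event. On $\{|Y|>z/2\}$ the universal bound $|f_z'|\le 1$ and Markov's inequality reduce the task to bounding $\E|Y|^{2p}$: since $|\xi|\le|\Delta|=|W-W'|$, the $c_r$-inequality gives $\E|Y|^{2p}\le 2^{2p-1}(\E|W|^{2p}+\E|W-W'|^{2p})$, and exchangeability together with another application of the $c_r$-inequality gives $\E|W-W'|^{2p}\le 2^{2p}\E|W|^{2p}$, so $\E|Y|^{2p}\le C(p)\E|W|^{2p}$ as needed.

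I do not anticipate a serious obstacle; the argument is a careful but direct exercise in the spirit of Lemmas~5.1--5.2 of Chen and Shao \cite{chen2001non}. The two points requiring mild attention are extracting the exponential decay $e^{(W^2-z^2)/2}$ also when $W<0$ (harmless since $\Phi(W)\le 1$ already suffices to produce it) and obtaining the moment reduction $\E|W+\xi|^{2p}\le C(p)\E|W|^{2p}$ purely from exchangeability, since the hypothesis only gives $|\xi|\le|\Delta|$ and provides no absolute bound on $\xi$.
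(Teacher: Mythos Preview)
Your proposal is correct and follows essentially the same strategy as the paper's proof: use the explicit formula for $f_z$ together with the Mills-ratio tail bound in a ``bulk'' region, and the universal bounds $\|f_z\|\le\sqrt{2\pi}/4$, $\|f_z'\|\le 1$ plus Markov's inequality in a ``tail'' region, with the moment reduction $\E|W+\xi|^{2p}\le C(p)\E|W|^{2p}$ coming from $|\xi|\le|\Delta|$ and exchangeability exactly as you describe. The only cosmetic difference is that the paper uses a three-way split ($W\le 0$, $0<W\le z/2$, $W>z/2$) --- handling the half-line $\{W\le 0\}$ via the separate bound $e^{w^2/2}\Phi(w)\le 1/2$ --- whereas your symmetric two-way split $\{|W|\le z/2\}$ versus $\{|W|>z/2\}$ absorbs the negative axis partly into the bulk (via $\Phi(W)\le 1$) and partly into the tail (via Markov); both lead to the same bound.
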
 

\begin{lemma} \label{lem.bsg}  
	Let $z\ge 5$, $|u| \le 100$ and $g_{z}(w) = (wf_z(w))'=f_{z}(w)+wf_{z}'(w)$. Then
	$$\E g_{z}(W+u) \le \dfrac{C(p)(1+\E|W|^{2p})}{(1+z)^p}.$$
\end{lemma}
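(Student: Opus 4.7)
The plan is to bound $\E|g_z(W+u)|$ (which dominates $\E g_z(W+u)$) by splitting the domain of integration according to whether $W+u$ lies in the ``middle region'' $[-z/2,z/2]$ or in its complement, and exploiting the explicit representation of the Stein solution together with Mills' ratio.

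I would first recall the classical closed form
\begin{equation*}
f_z(w)=\begin{cases}\sqrt{2\pi}\,e^{w^2/2}\Phi(w)(1-\Phi(z)), & w\le z,\\ \sqrt{2\pi}\,e^{w^2/2}\Phi(z)(1-\Phi(w)), & w>z,\end{cases}
\end{equation*}
and combine it with \eqref{s3} to rewrite
\begin{equation*}
g_z(w)=(1+w^2)f_z(w)+w\bigl(\mathbf{1}(w\le z)-\Phi(z)\bigr).
\end{equation*}
From the Mills-ratio estimates $\sqrt{2\pi}\,e^{w^2/2}\Phi(w)\le 1/|w|$ for $w<0$ and $\sqrt{2\pi}\,e^{w^2/2}(1-\Phi(w))\le 1/w$ for $w>0$, a direct case analysis gives the \emph{global} bound $|g_z(w)|\le C(1+|w|)$, valid for every $w\in\R$ and every $z\ge 0$. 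On the middle region $|w|\le z/2$ (where necessarily $w\le z$), the same formula together with $1-\Phi(z)\le e^{-z^2/2}/(z\sqrt{2\pi})$ yields $|f_z(w)|\le z^{-1}e^{-3z^2/8}$, and a parallel computation based on the Stein identity $f_z'(w)=wf_z(w)+\mathbf{1}(w\le z)-\Phi(z)$ gives $|f_z'(w)|$ bounded by the same super-polynomially small quantity; hence $|g_z(w)|\le C z\, e^{-3z^2/8}$ on $[-z/2,z/2]$.

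Using these two estimates I would decompose
\begin{equation*}
\E|g_z(W+u)|=\E|g_z(W+u)|\mathbf{1}(|W+u|\le z/2)+\E|g_z(W+u)|\mathbf{1}(|W+u|>z/2).
\end{equation*}
The middle contribution, combined with Lemma \ref{lem.bound.moment.W}, is super-polynomially small in $z$ and is therefore absorbed into the desired bound. For the tail contribution, Cauchy--Schwarz gives
\begin{equation*}
\E|g_z(W+u)|\mathbf{1}(|W+u|>z/2)\le C\bigl(\E(1+|W+u|)^2\bigr)^{1/2}\bigl(\bbP(|W+u|>z/2)\bigr)^{1/2},
\end{equation*}
and Markov's inequality together with Lemma \ref{lem.bound.moment.W} bounds $\bbP(|W+u|>z/2)\le C(p)(1+\E|W|^{2p})/(1+z)^{2p}$, yielding the required estimate $C(p)(1+\E|W|^{2p})/(1+z)^p$. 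The regime of small $z$ (say $z\in[5,M_0]$ for some fixed $M_0$ beyond which the exponential term genuinely dominates) is handled trivially by the global bound $|g_z(w)|\le C(1+|w|)$ and absorbed into $C(p)$.

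The main obstacle is the careful Mills-ratio bookkeeping needed to simultaneously obtain the linear-growth global bound (which must accommodate the jump of size $z$ in $g_z$ at $w=z$) and the exponentially small bound on the middle region, while ensuring that the tail step uses exactly the moment $\E|W|^{2p}$ rather than a higher power, so as to match the precise dependence claimed in the statement.
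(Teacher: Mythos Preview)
Your proposal is correct and follows essentially the same strategy as the paper's proof: both use the explicit representation $g_z(w)=(1+w^2)f_z(w)+w(\mathbf{1}(w\le z)-\Phi(z))$, a Mills-ratio bound to show $g_z$ is super-polynomially small on the middle region, and Cauchy--Schwarz plus Markov's inequality (with Lemma~\ref{lem.bound.moment.W}) to control the tail $\{W+u>z/2\}$. The only cosmetic difference is that the paper splits into three regions $\{W+u\le 0\}$, $\{0<W+u\le z/2\}$, $\{W+u>z/2\}$ and invokes the Chen--Shao facts $g_z\ge 0$, $g_z(w)\le 2(1-\Phi(z))$ for $w\le 0$, and monotonicity of $g_z$ on $(0,z]$, whereas you use the symmetric two-region split $\{|W+u|\le z/2\}$ versus its complement and derive a global linear-growth bound $|g_z(w)|\le C(1+|w|)$ directly; both routes yield the same estimate with the same moment dependence.
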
 

We are now ready to prove the main theorem.

\begin{proof}[Proof of Theorem \ref{thm.main11}]
	To bound $|\mathbb{P}(W\le z) - \Phi(z)|$, it suffices to consider $z \ge 0$ since we can simply apply the result to $-W$ when $z<0$. In view of \eqref{el_main} and the fact that
	\[\E(\Delta^2\mathbf{1}(|\Delta|>a))\le \sqrt{\E(\Delta^4\mathbf{1}(|\Delta|>a))},\] 
	we may, without loss of generality, assume that $z\ge 5$. Since
	\begin{equation*}
		\begin{split}
			|\mathbb{P}(W\le z) - \Phi(z)|&=|\mathbb{P}(W> z) - (1-\Phi(z))|\\
			&\le \dfrac{\E|1+W|^{2p}}{(1+z)^{2p}}+(1-\Phi(z))\\
			&\le \dfrac{C(p)\left(1+\E|W|^{2p}\right)}{(1+|z|)^p},
		\end{split}
	\end{equation*}
	the conclusion of the theorem holds if $a\ge 1$. Thus, it remains to consider the case $0<a<1$.
	For any absolutely continuous function $f$, we obtain from the exchangeability and \eqref{nonlinear} that
	\begin{equation}\label{s}
		\begin{aligned}
			\E\left((W-W')(f(W)-f(W'))\right)&= 2\E Wf(W)-2\E W'f(W)\\
			&= 2\E Wf(W)-2\E f(W)((1-\tau)W+R)\\
			&=2\tau \E Wf(W)-2\E f(W)R.
		\end{aligned}
	\end{equation}
	We recall that $f_{z}$ is the solution to the Stein equation \eqref{s3}. 
	Since \eqref{s} holds for $f_{z}$, we have
	\begin{equation}\label{s4}
		\begin{aligned}
			\mathbb{P}(W \le z) - \Phi(z)
			&= \E(f_{z}'(W) - Wf_{z}(W))   \\
			&= \E f_{z}'(W) - \dfrac{1}{2\tau} \E(W-W')(f_{z}(W)-f_{z}(W')) - \dfrac{1}{\tau} \E(f_{z}(W)R)   \\
			&= \E\left(f_{z}'(W)(1- \dfrac{1}{2\tau}(W - W')^2)\right) - \dfrac{1}{\tau} \E(f_{z}(W)R) \\
			& \quad  - \dfrac{1}{2\tau} \E\left((W-W')(f_{z}(W)-f_{z}(W')- (W-W')f_{z}'(W))\right) \\    
			&:= T_1 + T_2 +T_3.					  	  
		\end{aligned} 
	\end{equation} 
	It follows from the Cauchy--Schwarz inequality and Lemma \ref{lem.bound.solutions1} that
	\begin{align} \label{b01}
		\begin{aligned}
			|T_1| &\le \sqrt{\E(f_{z}'(W))^2} \sqrt{\E\left( 1- \dfrac{1}{2\tau}\E((W-W')^2|W)\right)^2 } \\
			&\le \dfrac{C(p)(1+\E|W|^{2p})}{(1+z)^p }\sqrt{\E\left( 1- \dfrac{1}{2\tau}\E((W-W')^2|W)\right)^2 },
	\end{aligned} \end{align}
	and
	\begin{align} \label{b02}
		\begin{aligned}
			|T_2| &\le \dfrac{1}{\tau} \sqrt{\E(f_{z}(W))^2} \sqrt{\E(R^2)}\\
			&\le \dfrac{C(p)(1+\E|W|^{2p})}{(1+z)^p}\dfrac{\sqrt{\E(R^2)}}{\tau}.
		\end{aligned}
	\end{align}
	It remains to bound $T_3$. By recalling $\Delta=W-W'$, we have
	\begin{align} \label{s5}
		\begin{aligned}
			(-2\tau)T_3 
			&= \E\left(\Delta(f_{z}(W)-f_{z}(W-\Delta)- \Delta f_{z}'(W))\right)   \\
			&:= T_{3,1} + T_{3,2},	
		\end{aligned}
	\end{align}
	where
	\[T_{3,1}=\E\left(\Delta \mathbf{1}(|\Delta|>a)(f_{z}(W)-f_{z}(W-\Delta)- \Delta f_{z}'(W))\right)\]
	and
	\[T_{3,2}=\E\left(\Delta \mathbf{1}(|\Delta|\le a)(f_{z}(W)-f_{z}(W-\Delta)- \Delta f_{z}'(W))\right).\]
	By applying Taylor's expansion and the Cauchy--Schwarz inequality, we have
	\begin{equation}\label{b030}
		\begin{aligned}
			|T_{3,1}| &= |\E\left(\Delta^2 \mathbf{1}(|\Delta|>a)(f_{z}'(W+\xi)- f_{z}'(W))\right)| \\
			&\le |\E\left(\Delta^2 \mathbf{1}(|\Delta|>a)(f_{z}'(W+\xi)\right)|\\
			&\qquad +|\E\left(\Delta^2 \mathbf{1}(|\Delta|>a) f_{z}'(W))\right)|  \\	
			&\le \left(\E\Delta^4 \mathbf{1}(|\Delta|>a)\E(f_{z}'(W+\xi))^2\right)^{1/2}\\
			&\qquad +\left(\E\Delta^4 \mathbf{1}(|\Delta|>a)\E(f_{z}'(W))^2\right)^{1/2},
		\end{aligned} 
	\end{equation}
	where $\xi$ is a random variable satisfying $0\le |\xi| \le |\Delta|$.
	Combining \eqref{estimate.lem1.02} and \eqref{b030} yields
	\begin{equation}\label{b031}
		\begin{aligned}
			|T_{3,1}|&\le \dfrac{C(p)(1+\E|W|^{2p})\sqrt{\E\Delta^4 \mathbf{1}(|\Delta|>a)}}{(1+z)^p}. 
		\end{aligned} 
	\end{equation}
	From the Stein equation \eqref{s3}, we have
	\begin{equation}\label{b032}
		\begin{aligned}
			T_{3,2} &=  \E\left(\Delta \mathbf{1}(|\Delta| \le a)\int_{-\Delta}^{0}(f_{z}'(W+t) - f_{z}'(W))\dx t \right)   \\
			&=  \E\left( \Delta \mathbf{1}(|\Delta| \le a)\int_{-\Delta}^{0}(W+t)f_{z}(W+t) - Wf_{z}(W)\dx t \right)   \\
			& \quad +\E\left( \Delta \mathbf{1}(|\Delta| \le a)\int_{-\Delta}^{0}(\mathbf{1}(W+t \le z) - \mathbf{1}(W \le z))\dx t\right)  \\
			&:= T_{3,2,1} + T_{3,2,2}.
		\end{aligned} 
	\end{equation}
	By using Lemma \ref{lem.bsg} and noting that $g_z(w)\ge 0$ for all $w$, we have
	\begin{equation} \label{b0321}
		\begin{aligned}
			|T_{3,2,1}| &= \left|\E\left( \Delta \mathbf{1}(|\Delta| \le a)\int_{-\Delta}^{0} \int_{0}^{t}g_{z}(W+u)\dx u \dx t \right)\right|  \\
			&\le  \left|\E\left( \Delta \mathbf{1}(0 \le \Delta \le a)\int_{-\Delta}^{0}\int_{0}^{t}g_{z}(W+u)\dx u \dx t \right)\right|\\ 
			&\quad+ \left|\E\left( \Delta \mathbf{1}(-a \le \Delta \le 0)\int_{-\Delta}^{0}\int_{0}^{t}g_{z}(W+u)\dx u \dx t \right)\right|\\
			&\le  \left| \E\left( \Delta \mathbf{1}(0 \le \Delta \le a)\int_{0}^{\Delta}\int_{-t}^{0}g_{z}(W+u)\dx u \dx t \right) \right| \\
			&\quad+ \left|\E\left( \Delta \mathbf{1}(-a \le \Delta \le 0)\int_{0}^{-\Delta}\int_{0}^{t}g_{z}(W+u)\dx u \dx t \right)\right|\\ 
			&\le  a\int_{0}^{a}\int_{-t}^{0}\E g_{z}(W+u)\dx u \dx t  + a\int_{0}^{a}\int_{0}^{t}\E g_{z}(W+u)\dx u \dx t \\
			&= 	a \int_{0}^{a}\int_{-t}^{t}\E g_{z}(W+u)\dx u \dx t\\
			& \le  \dfrac{C(p)(1+\E|W|^{2p})a^3}{(1+z)^p}.
		\end{aligned} 
	\end{equation}		
	Similarly,
	\begin{equation} \label{b0324}
		\begin{aligned}
			|T_{3,2,2}|&\le\left|\E\left( \Delta \mathbf{1}(0\le \Delta\le a)\int_{-\Delta}^{0}\mathbf{1}(z< W \le z-t)\dx t\right)\right|\\
			&\quad+\left|\E\left( \Delta \mathbf{1}(-a\le \Delta<0)\int_{-\Delta}^{0}\left(\mathbf{1}(W \le z-t) - \mathbf{1}(W \le z)\right)\dx t \right)\right|\\
			&\le \E\left( \Delta \mathbf{1}(0\le \Delta\le a)\int_{-\Delta}^{0}\mathbf{1}(z-a\le  W \le z+a)\dx t\right) \\
			&\quad+\E\left(-\Delta \mathbf{1}(-a\le \Delta<0)\int_{0}^{-\Delta}\mathbf{1}(z-a\le  W \le z+a)\dx t \right)\\
			&= \E\left( \Delta^2 \mathbf{1}(|\Delta|\le a)\mathbf{1}(z-a\le  W \le z+a)\right)\\
			&\le \frac{C(p)(1+\E|W|^{2p})a(\tau+\sqrt{\E R^{2}})}{(1+z)^p},
		\end{aligned} 
	\end{equation}
	where we have applied Lemma \ref{lem.concentration.inequality} in the last inequality.
	Combining \eqref{b032}, \eqref{b0321} and \eqref{b0324} yields
	\begin{equation} \label{b0325}
		\begin{aligned}
			|T_{3,2}|&\le \frac{C(p)(1+\E|W|^{2p})(a^3+a(\tau+\sqrt{\E R^{2}}))}{(1+z)^p}.
		\end{aligned} 
	\end{equation}
	Combining \eqref{s5}, \eqref{b031} and \eqref{b0325} yields
	\begin{align} \label{b03}
		\begin{aligned}
			|T_{3}| &\le \dfrac{C(p)(1+\E|W|^{2p})}{(1+z)^p}\left(\dfrac{a \sqrt{\E R^2}}{\tau}+ a + \dfrac{ a^3}{\tau}+\dfrac{\sqrt{\E\Delta^4 \mathbf{1}(|\Delta|>a)}}{\tau}\right). 
		\end{aligned} 
	\end{align}
	The conclusion of the theorem follows from \eqref{s4}, \eqref{b01}, \eqref{b02} and \eqref{b03}.
\end{proof}

\section{Applications}\label{sec.appl}
In this section, we apply Theorem \ref{thm.main11} to obtain non-uniform Berry--Esseen bounds in the central limit theorem
for the squared-length of the total spin in the mean-field classical $N$-vector models and Jack deformations of the character ratio.

\subsection{Mean-field  classical $N$-vector models}\label{subsec.meanfield}

Let $N\ge 2$ be an integer, let ${\mathbb{S}}^{N-1}$ denote the unit sphere in ${\mathbb{R}}^N$ and let $\mu$ be the uniform
probability measure on ${\mathbb{S}}^{N-1}$.
We consider the mean-field classical $N$-vector spin models (also called the mean-field $O(N)$ models),
where each spin $\sigma_i$ is in ${\mathbb{S}}^{N-1}$, at a complete graph vertex $i$ among $n$ vertices (see \cite{kirkpatrick2016asymptotics}).
The state space is $\Omega_n=({\mathbb{S}}^{N-1})^n$ with the product measure $\mathbb{P}_n=\mu\times\dots\times \mu$. 
In the absence of an external field, each spin configuration $\sigma=(\sigma_1,\dots, \sigma_n)$ in the state space $\Omega_n$ has a Hamiltonian defined by
\[H_n(\sigma)=-\dfrac{1}{2n}\sum_{i=1}^n \sum_{j=1}^n \langle \sigma_i , \sigma_j \rangle,\]
where $\langle \cdot , \cdot \rangle$ is the inner product in ${\mathbb{R}}^N$.
Let $\beta>0$ be the inverse temperature. 
The Gibbs measure with Hamiltonian $H_n$ is the probability measure $\mathbb{P}_{n,\beta}$ on $\Omega_n$
with density function:
\[\dx\mathbb{P}_{n,\beta}(\sigma)=\dfrac{1}{Z_{n,\beta}}\exp\left( -\beta H_{n}(\sigma)\right)\dx\mathbb{P}_n(\sigma),\]
where $Z_{n,\beta}=\int_{\Omega_n}\exp\left(-\beta H_n (\sigma) \right)\dx \mathbb{P}_n(\sigma).$
The cases where $N=2$, $N=3$, and $N=4$ of this model reduce to the $XY$ model, the Heisenberg model, and the Toy model, respectively 
(see, e.g., \cite[p. 412]{friedli2017statistical}). 

Kirkpatrick and Nawaz \cite{kirkpatrick2016asymptotics} proved a multivariate central limit theorem
for the total spin in the subcritical temperature, a non-normal limit theorem
for the total spin at the critical temperature, and a central limit theorem
for the squared-length of the total spin in the supercritical temperature. 
In particular, Kirkpatrick and Nawaz \cite{kirkpatrick2016asymptotics} proved
that when $\beta >N$,
the bounded Lipschitz distance between the squared-length of the total spin
and a standard normal random variable is bounded by $C (\log n/n)^{1/4}$.
Thành and Tu (see \cite[Theorem 1.1]{thanh2019error}) improved the result 
of Kirkpatrick and Nawaz \cite{kirkpatrick2016asymptotics} by proving a uniform Berry--Esseen inequality with bound $n^{-1/2}$.
The special case $N=3$ (the Heisenberg model) was studied by Shao and Zhang \cite[Theorem 3.3]{shao2019berry}.
In this subsection, we will apply Theorem \ref{thm.main11} to give a non-uniform Berry--Esseen bound, refining the result of 
Th\`{a}nh
and Tu \cite{thanh2019error}.

Let $I_{\nu}$ denote the modified Bessel function of the first kind
(see, e.g., \cite[p. 713]{arfken2005mathematical})
which is defined by
\[I_{\nu}(x) = \sum_{s=0}^{\infty} \frac{1}{s!\Gamma(s+\nu+1)}\left(\frac{x}{2}\right)^{\nu+2 s},\]
where $\Gamma(\cdot)$ is the Gamma function. Let  
\begin{equation*}
	f(x)=\dfrac{I_{N/2}(x)}{I_{(N/2)-1}(x)},\ x>0,
\end{equation*}
and
\begin{equation}\label{define.B2}
	B^2=\dfrac{4\beta^2}{(1-\beta f^{'}(b))b^2}\left[1-\dfrac{(N-1)f(b)}{b}-(f(b))^2 \right].
\end{equation}
In the case $\beta>N$, there is a unique strictly positive solution $b$ to
the equation (see, e.g., \cite[p. 2]{thanh2019error}):
\begin{equation*}
	x - \beta f(x)= 0.
\end{equation*}
Let $S_n=\sum_{j=1}^n \sigma_j$ and
\begin{equation}\label{define.W}
	W_{n}= \sqrt{n}\left(\dfrac{\beta^2}{n^2 b^2}|S_n|^2 -1 \right).
\end{equation}
Here and hereafter, for vector $x\in\R^N$, the Euclidean norm of $x$ is also denoted by $|x|$.
The result of this subsection is the following theorem. This result extends Theorem 1.1 of Th\`{a}nh and Tu \cite{thanh2019error} and, 
consequently, also extends Theorem 3.3 of Shao and Zhang \cite{shao2019berry}.

\begin{theorem} \label{app2}
	Let $p\ge 1$ and let $W_n$ be the random variable defined as in \eqref{define.W} and $B^2$ as in \eqref{define.B2} with $\beta >N$. Then for any $z\in \mathbb{R}$, we have
	\begin{equation}\label{ap2} 
		\left|\mathbb{P}\left(W_n/B\le z\right) -\Phi(z)\right| \le \dfrac{C(N,\beta,p)}{(1+|z|)^p n^{1/2}}.
	\end{equation}
\end{theorem}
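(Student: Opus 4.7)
The plan is to construct a single-site Gibbs exchangeable pair $(W_n,W_n')$ and apply Theorem \ref{thm.main11} to the normalized variable $W_n/B$, with careful tracking of $n$-dependence in every term.

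First, I would build the pair as follows. Given $\sigma=(\sigma_1,\ldots,\sigma_n)$ drawn from $\mathbb{P}_{n,\beta}$, draw an index $I$ uniformly from $\{1,\ldots,n\}$, independently of $\sigma$, and replace $\sigma_I$ by $\sigma_I'$ sampled from the conditional law of $\sigma_I$ given $\{\sigma_j:j\neq I\}$. With $S_n'=S_n-\sigma_I+\sigma_I'$ and $W_n'$ defined from $S_n'$ as in \eqref{define.W}, $(W_n,W_n')$ is exchangeable. Since $|\sigma_j|=1$ for each $j$, the identity $|S_n'|^2-|S_n|^2=2\langle S_n-\sigma_I,\,\sigma_I'-\sigma_I\rangle$ yields the deterministic bound $|\Delta|\le C(\beta)/\sqrt{n}$. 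This suggests the choice $a=C(\beta)/\sqrt{n}$ in Theorem \ref{thm.main11}, which annihilates the $\sqrt{\mathbb{E}\Delta^4\mathbf{1}(|\Delta|>a)}$ term.

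Next I would verify the nonlinear regression condition \eqref{nonlinear}. The conditional law of $\sigma_I'$ given the remaining coordinates has density proportional to $\exp(\beta\langle\cdot,m\rangle)$ on $\mathbb{S}^{N-1}$ with $m=(S_n-\sigma_I)/n$, so its conditional mean equals $f(\beta|m|)m/|m|$. A Taylor expansion of $f$ at the equilibrium $b$, using the defining relation $b-\beta f(b)=0$ together with the substitution $|S_n|^2/n^2 = b^2/\beta^2 + b^2 W_n/(\beta^2\sqrt{n})$ coming from \eqref{define.W}, should lead to the representation $\mathbb{E}(W_n-W_n'\mid\sigma)=\tau W_n + R$ with $\tau\asymp 1/n$ and $\sqrt{\mathbb{E}R^2}=O(n^{-3/2})$; a parallel second-order expansion of $\Delta^2$ should yield $\sqrt{\mathbb{E}(1-(2\tau)^{-1}\mathbb{E}(\Delta^2\mid W_n))^2}=O(n^{-1/2})$. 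These are precisely the algebraic estimates already developed for the uniform bound in Th\`{a}nh and Tu \cite{thanh2019error}, so this step essentially reuses their computations.

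The genuinely new ingredient is the $n$-uniform polynomial-moment bound $\mathbb{E}|W_n/B|^{2p}\le C(N,\beta,p)$ required by the prefactor in Theorem \ref{thm.main11}. I would derive this from exponential concentration of $|S_n|/n$ around the deterministic value $b/\beta$ in the supercritical regime $\beta>N$; this concentration follows from the Hubbard--Stratonovich transform of $\mathbb{P}_{n,\beta}$ and a Laplace-method analysis at the unique (up to rotation) maximizer of the associated rate function, which is also the asymptotic that drives the central limit theorem of Kirkpatrick and Nawaz \cite{kirkpatrick2016asymptotics}. The resulting Gaussian-type tail $\mathbb{P}(\,|\,|S_n|/n-b/\beta\,|>t)\le Ce^{-cnt^2}$ integrates directly into the desired polynomial-moment control. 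Plugging $a\asymp n^{-1/2}$, $\tau\asymp n^{-1}$, $\sqrt{\mathbb{E}R^2}=O(n^{-3/2})$, and $\sqrt{\mathbb{E}(1-(2\tau)^{-1}\mathbb{E}(\Delta^2\mid W_n))^2}=O(n^{-1/2})$ into Theorem \ref{thm.main11} then renders every nontrivial term in the parenthesis of order $n^{-1/2}$, which combined with the moment bound produces \eqref{ap2}. I expect the main obstacle to be this last moment-bound step: producing $n$-uniform polynomial moment estimates in a form sharp enough that the $(1+\mathbb{E}|W_n|^{2p})$ prefactor does not overwhelm the $n^{-1/2}$ rate, an issue that was absent from the uniform Berry--Esseen analysis in \cite{thanh2019error}.
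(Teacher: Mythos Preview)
Your plan matches the paper's proof almost exactly: the same single-site Gibbs exchangeable pair, the same deterministic bound $|\Delta|\le C(\beta)/\sqrt n$ (so the truncation term vanishes), the same regression expansion giving $\tau\asymp 1/n$ and $\sqrt{\E R^2}/\tau\asymp n^{-1/2}$, and then Theorem~\ref{thm.main11}. Two points are worth flagging. First, for the moment bound $\E|W_n|^{2p}\le C(N,\beta,p)$ the paper does \emph{not} go through Hubbard--Stratonovich and Laplace asymptotics; it simply observes that the proof of Lemma~A.1 in \cite{thanh2019error} (which gave $\E(\beta|S_n|/n-b)^2\le C/n$) carries over verbatim to the $2p$-th moment, yielding $\E|\beta|S_n|/n-b|^{2p}\le C(p,\beta)n^{-p}$, and then reads off $\E|W_n|^{2p}\le C$. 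Your Hubbard--Stratonovich route would also work, but is heavier machinery than needed. Second, you are too quick in saying the conditional-variance bound is ``precisely the algebraic estimates already developed'' in \cite{thanh2019error}: that paper only controls the \emph{first} absolute moment $\E\bigl|1-\tfrac{1}{2\tau}\E(\Delta^2\mid W_n)\bigr|$, whereas Theorem~\ref{thm.main11} needs the \emph{second} moment. The paper explicitly calls this upgrade ``more challenging'' and handles it by rewriting each remainder piece $R_2,R_3,R_5$ as a pointwise quantity bounded by $C\,n^{5/2}(|W_n|+1)$, then squaring and invoking the new moment bound $\E W_n^2\le C$; the piece $R_4$ is the only one that can be recycled directly from \cite{thanh2019error}. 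This is not a fatal gap in your plan---the technique is straightforward once you see it---but it is genuinely new work and you should not expect to quote it.
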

\begin{proof}
	We describe the construction of an exchangeable pair as follows.
	Let $\sigma' = \{\sigma_1',\ldots,\sigma_n'\}$, 
	where for each $i$ fixed, $\sigma_i'$ is an independent copy of $\sigma_i$ given $\{\sigma_j, j\ne i\}$, i.e., given $\{\sigma_j, j\ne i\}$,
	$\sigma_i'$ and $\sigma_i$ have the same distribution and $\sigma_i'$ is conditionally independent of $\sigma_i$.
	Let $I$ be a random index independent of all others and uniformly distributed over $\{1,\ldots,n\}$, and let
	\begin{align} \label{define.Wprime}
		W_{n}' = \sqrt{n}\left(\dfrac{\beta^2}{n^2 b^2}|S_n'|^2 - 1 \right),
	\end{align}
	where  $S_n' = \sum_{j=1}^n\sigma_j-\sigma_I + \sigma_I'$. 
	Then $(W_{n},W_{n}')$ is an exchangeable pair \cite[p. 1124]{kirkpatrick2016asymptotics}.
	Since $|S_n|\le n$, $|S_{n}'|\le n$ and $|S_n-S_{n}'|=|\sigma_I - \sigma_{I}'|\le 2$, it follows from \eqref{define.W} and \eqref{define.Wprime} that
	\begin{equation}\label{bounded-pair-meanfield}
		|\Delta|:=|W_{n}-W_{n}'|=\sqrt{n}\left|\dfrac{\beta^2}{n^2 b^2}(S_n-S_n')(S_n+S_n')\right|\le \dfrac{4\beta^2}{\sqrt{n} b^2}.
	\end{equation}
	For $1\le i\le n$, set
	\[\sigma^{(i)}=S_n-\sigma_i.\]
	Kirkpatrick and Nawaz \cite[Equation (8)]{kirkpatrick2016asymptotics} proved that
	\begin{equation}\label{kk01}
		\E(W_n-W_{n}^{'}|\sigma) = \dfrac{2}{n}W_n +\dfrac{2}{\sqrt{n}}-\dfrac{2\beta^2}{n^{3/2}b^2}-\dfrac{2\beta^2}{n^{5/2}b^2}\sum_{i=1}^{n}f\left(\dfrac{\beta|\sigma^{(i)}|}{n}\right)|\sigma^{(i)}|.
	\end{equation}
	Let
	\[R_1=-\dfrac{2\beta^2}{n^{3/2}b^2}+\dfrac{2\beta^2}{n^{5/2}b^2}\sum_{i=1}^{n}\left(f\left(\dfrac{\beta|S_n|}{n}\right)|S_n|-f\left(\dfrac{\beta|\sigma^{(i)}|}{n}\right)|\sigma^{(i)}|\right).\]
	Then, it follows from \eqref{kk01} that
	\begin{equation}\label{kk04}
		\E(W_n-W_{n}^{'}|\sigma) = \dfrac{2}{n}W_n +\dfrac{2}{\sqrt{n}}-\dfrac{2\beta}{n^{1/2}b^2}\left(\dfrac{\beta|S_n|}{n}\right)f\left(\dfrac{\beta|S_n|}{n}\right)+R_1.
	\end{equation}
	Let $g(x)=xf(x)$, $h(x)=f(x)/x$, $x>0$, and let
	\[V=\dfrac{\beta|S_n|}{nb}+1\in[1,1+\beta/b].\]
	By Taylor's expansion and definition of $W_n$, we have (see Th\`{a}nh and Tu \cite[Equations (2.2) and (2.3)]{thanh2019error})
	for some
	positive random variable $\xi$,
	\begin{equation}\label{eq:bound02}
		g\left(\dfrac{\beta|S_n|}{n}\right) =
		g(b) + g'(b)\left(\dfrac{\beta|S_n|}{n}-b\right) +\dfrac{g''(\xi)}{2} \left(\dfrac{\beta|S_n|}{n}-b\right)^2
	\end{equation}
	and	
	\begin{equation}\label{eq:bound03}
		\begin{split}
			\dfrac{\beta|S_n|}{n}-b&=\dfrac{bW_n}{\sqrt{n}V}=\dfrac{bW_n}{2\sqrt{n}}-\dfrac{bW_{n}^2}{2nV^2}.
		\end{split}
	\end{equation}
	By using \eqref{kk04}--\eqref{eq:bound03} and noting that $b=\beta f(b)$, we obtain
	\begin{equation}\label{nonlinear-meanfield}
		\begin{split}
			&\E(W_n-W_{n}^{'}|\sigma)\\
			&= \dfrac{2W_n}{n} +\dfrac{2}{\sqrt{n}} +R_1 - \dfrac{2\beta}{n^{1/2}b^2} \left(g(b) 
			+ g'(b)\left( \dfrac{bW_n}{2\sqrt{n}}-\dfrac{bW_{n}^2}{2nV^2}\right)+\dfrac{g''(\xi)}{2} \left(\dfrac{\beta|S_n|}{n}-b\right)^2\right) \\
			&= \dfrac{2W_n}{n} +\dfrac{2}{\sqrt{n}} +R_1-\dfrac{2\beta}{n^{1/2}b^2} \left(\dfrac{b^2}{\beta} + \left(\dfrac{b}{\beta}+bf'(b) \right)\left( \dfrac{bW_n}{2\sqrt{n}}-\dfrac{bW_{n}^2}{2nV^2}\right)+\dfrac{g''(\xi)b^2W_{n}^2}{2nV^2}\right) \\
			&= \dfrac{1-\beta f'(b)}{n}(W_n+R^*),
		\end{split}
	\end{equation}
	where
	\begin{equation}\label{define.R}
		R^*=\dfrac{n}{1-\beta f'(b)}\left(R_1+\dfrac{\beta W_{n}^2}{n^{3/2 }V^2} \left(\dfrac{1}{\beta}+f'(b) -g''(\xi)\right)\right).
	\end{equation}
	Since the $\sigma$-field generated by $\sigma$ contains the $\sigma$-field generated by $W_n$, \eqref{nonlinear-meanfield} implies that 
	the exchangeable pair $(W_{n},W_{n}')$ satisfies \eqref{nonlinear} with
	\[\tau=\dfrac{1-\beta f'(b)}{n}\ \text{ and }\ R=\dfrac{(\beta f'(b)-1)\E(R^*|W_n)}{n}.\]
	Th\`{a}nh and Tu \cite{thanh2019error} (see Lemma A.2 and Equation (A.5) ibidem) also showed that for all $x>0$,
	\begin{equation}\label{f-and-g03}
		|f(x)|<1,\ 0<f'(x)<1,\ \ |g''(x)|<6\ \text{ and } -5\le h'(x)<0.
	\end{equation}
	By noting that 
	\begin{equation}\label{norm-of-Sn}
		\left| |S_n|-|\sigma^{(i)}|\right|\le |\sigma_i|=1,\ 1\le i\le n,
	\end{equation}
	and using \eqref{f-and-g03}, we have
	\begin{equation}\label{estimate-R1}
		\begin{aligned}
			|R_1|&\le  \dfrac{2\beta^2}{n^{3/2}b^2}+\dfrac{2\beta^2}{n^{5/2}b^2}\left|\sum_{i=1}^{n}\left(f\left(\dfrac{\beta|S_n|}{n}\right)|S_n|-f\left(\dfrac{\beta|\sigma^{(i)}|}{n}\right)|\sigma^{(i)}|\right)\right|\\
			&\le \dfrac{2\beta^2}{n^{3/2}b^2}+\dfrac{2\beta^2}{n^{5/2}b^2}\left|\sum_{i=1}^{n}|S_n|\left(f\left(\dfrac{\beta|S_n|}{n}\right)-f\left(\dfrac{\beta|\sigma^{(i)}|}{n}\right)\right)\right|\\
			&\qquad+\dfrac{2\beta^2}{n^{5/2}b^2}\left|\sum_{i=1}^{n}f\left(\dfrac{\beta|\sigma^{(i)}|}{n}\right) \left(|S_n|-|\sigma^{(i)}|\right)\right|\\
			&\le \dfrac{2\beta^2}{n^{3/2}b^2}+\dfrac{2\beta^2}{n^{5/2}b^2}\sum_{i=1}^{n}\| f'\|\dfrac{\beta|S_n|}{n} \left| |S_n|-|\sigma^{(i)}|\right|+\dfrac{2\beta^2}{n^{5/2}b^2}\sum_{i=1}^{n}\|f\||\sigma_i|\\
			&\le \dfrac{2\beta^2}{n^{3/2}b^2}\left(1+\beta+1\right)\le \dfrac{C(\beta)}{n^{3/2}}.
		\end{aligned}
	\end{equation}
	Th\`{a}nh and Tu \cite[Lemma A.1]{thanh2019error} proved that
	\[\E\left(\dfrac{\beta|S_n|}{n} - b\right)^2 \le \dfrac{C(\beta)}{n}.\]
	Following the same steps as in the proof of Lemma A.1 by Th\`{a}nh and Tu \cite{thanh2019error}, we also have
	\[\E\Big|\dfrac{\beta|S_n|}{n} - b\Big|^{2p} \le \dfrac{C(p,\beta)}{n^{p}}.\]
	It thus follows that
	\begin{equation}\label{moment.W}
		\begin{aligned}
			\E|W_{n}|^{2p} &= n^{p} \E\left|\dfrac{\beta^2 |S_n|^2}{n^2 b^2} - 1 \right|^{2p} \\
			&= n^p \E\left|\left(\dfrac{\beta|S_n|}{nb}+1\right)\left(\dfrac{\beta |S_n|}{n b} - 1\right) \right|^{2p}\\
			& \le n^p\left(1+\dfrac{\beta}{b}\right)^{2p} \E\left|\dfrac{\beta |S_n|}{n b} - 1 \right|^{2p}\\
			& \le C(p,\beta).
		\end{aligned}	
	\end{equation}
	By combining \eqref{define.R}, \eqref{f-and-g03}, \eqref{estimate-R1} and \eqref{moment.W}, we obtain
	\begin{equation*}
		\begin{aligned}
			\E|R^*|^2&\le \dfrac{C(\beta)}{n}\left(1+\E W_{n}^4\right)\le \dfrac{C(\beta)}{n},
		\end{aligned}
	\end{equation*}
	which, together with Jensen's inequality for conditional expectation, implies
	\begin{equation}\label{estimate-moment-R*}
		\begin{aligned}
			\E\left(\E(R^*|W_n)\right)^2\le \E\left(\E((R^*)^2|W_n)\right)=\E|R^*|^2\le  \dfrac{C(\beta)}{n}.
		\end{aligned}
	\end{equation}
	For $1\le i\le n$, set
	\[b_i=\beta|\sigma^{(i)}|/n.\]
	Th\`{a}nh and Tu \cite{thanh2019error} (see Equation (2.4) and the computations on page 7 ibidem) showed that
	\begin{equation}\label{estimate.conditional.variance01} 
		\frac{1}{2\tau}\E((W_{n}-W_{n}')^2|\sigma)-B^2
		= \dfrac{2\beta^4}{n^3 b^4(1-\beta f'(b))} \left(R_2-\dfrac{2b}{\beta} R_3+R_4+R_{5}\right),
	\end{equation}
	where 
	\begin{align*}
		& R_2 = \sum_{i=1}^n\left(1-\dfrac{N-1}{\beta}\right)\left(|\sigma^{(i)}|^2 - \dfrac{(n-1)^2 b^2}{\beta^2} \right), \\
		& R_3 = \sum_{i=1}^n \left(|\sigma^{(i)}|\langle \sigma_i,\sigma^{(i)}\rangle - \dfrac{n^2 b^3}{\beta^3}\right),\\
		& R_4 = \sum_{i=1}^n \left(\langle \sigma_i,\sigma^{(i)}\rangle^2 - \left(1-\dfrac{N-1}{\beta}\right)\dfrac{(n-1)^2 b^2}{\beta^2}\right), \\
		& R_{5} = \sum_{i=1}^n(1-N)\left(\dfrac{f(b_i)}{b_i}- \dfrac{1}{\beta} \right)|\sigma^{(i)}|^2-2\sum_{i=1}^n\left(f(b_i)- \dfrac{b}{\beta} \right) |\sigma^{(i)}|\langle \sigma^{(i)},\sigma_i\rangle.
	\end{align*}
	In order to apply Theorem \ref{thm.main11}, we need to bound
	\[\E\left(\frac{1}{2\tau}\E((W_{n}-W_{n}')^2|W_{n})-B^2\right)^2,\]
	which is more challenging than bounding $\E\left|\frac{1}{2\tau}\E((W_{n}-W_{n}')^2|W_{n})-B^2\right|$ as in Th\`{a}nh and Tu \cite{thanh2019error}.
	By using \eqref{define.W}
	and \eqref{norm-of-Sn}, we have
	\begin{equation}\label{estimate-R2}
		\begin{aligned}
			|R_2|&=\left(1-\dfrac{N-1}{\beta}\right)\sum_{i=1}^n\left(\dfrac{n^2 b^2}{\beta^2}\left(\dfrac{\beta^2}{n^2 b^2}|S_n|^2-1\right)-\left(|S_n|^2-|\sigma^{(i)}|^2\right)+ \dfrac{n^2b^2-(n-1)^2b^2}{\beta^2} \right)\\
			&\le C(N,\beta)\left(n^3\left(\dfrac{\beta^2}{n^2 b^2}|S_n|^2-1\right)+n^2\left(|S_n|-|\sigma^{(i)}|\right)+\dfrac{n(2n-1)b^2}{\beta^2}\right)\\
			&\le C(N,\beta)n^{5/2}\left(|W_n|+1\right).
		\end{aligned}
	\end{equation}
	Similarly,
	\begin{equation}\label{estimate-R3}
		\begin{aligned}
			|R_3|&=\left|\sum_{i=1}^n\left(|S_n|\langle \sigma_i,S_n\rangle- \dfrac{n^2 b^3}{\beta^3}+|\sigma^{(i)}|\langle \sigma_i,\sigma^{(i)}\rangle -|S_n|\langle \sigma_i,S_n\rangle \right)\right|\\
			&=\left|\left(|S_n|^3- \dfrac{n^3 b^3}{\beta^3}\right)+\sum_{i=1}^n \left( (|\sigma^{(i)}|-|S_n|)\langle \sigma_i,\sigma^{(i)}\rangle -|S_n|\langle \sigma_i,\sigma_i\rangle \right)\right|\\
			&\le \left|\left(|S_n|^3- \dfrac{n^3 b^3}{\beta^3}\right)\right|+\sum_{i=1}^n |\langle \sigma_i,\sigma^{(i)}\rangle| +|S_n|\sum_{i=1}^n \langle \sigma_i,\sigma_i\rangle\\
			&\le C(\beta)\left(n^2\left||S_n|- \dfrac{n b}{\beta}\right|+n^2\right)\\
			&\le C(\beta)\left(n^3\left|\dfrac{\beta|S_n|}{nb}- 1\right|+n^2\right)\\
			&\le C(\beta)n^{5/2}\left(|W_n|+1\right),
		\end{aligned}
	\end{equation}
	and
	\begin{equation}\label{estimate-R5}
		\begin{aligned}
			|R_5|&\le \left|\sum_{i=1}^n(1-N)\left(\dfrac{f(b_i)}{b_i}- \dfrac{1}{\beta} \right)|\sigma^{(i)}|^2\right|+2\left|\sum_{i=1}^n\left(f(b_i)- \dfrac{b}{\beta} \right) |\sigma^{(i)}|\langle \sigma^{(i)},\sigma_i\rangle\right|\\
			&=\left|\sum_{i=1}^n(1-N)\left(\dfrac{f(b_i)}{b_i}- \dfrac{f(b)}{b} \right)|\sigma^{(i)}|^2\right|+2\left|\sum_{i=1}^n\left(f(b_i)-f(b)\right) |\sigma^{(i)}|\langle \sigma^{(i)},\sigma_i\rangle\right|\\
			&\le C(N,\beta)\left(\|g\|+\|f\|\right)n^2\sum_{i=1}^n|b_i-b|\\
			&= C(N,\beta)\left(\|g\|+\|f\|\right)n^2\sum_{i=1}^n\left(\left(\dfrac{\beta|S_n|}{n}- b\right)+\dfrac{\beta}{n}(\sigma^{(i)}-|S_n|)\right)\\
			&\le  C(N,\beta)\left( n^3\left|\dfrac{\beta|S_n|}{nb}- 1\right| +\sum_{i=1}^n\dfrac{\beta}{n}\left||\sigma^{(i)}|-|S_n|\right|\right)\le C(N,\beta)n^{5/2}\left(|W_n|+1\right),
		\end{aligned}
	\end{equation}
	where in \eqref{estimate-R5}, we have also used the facts that $1/\beta=f(b)/b$ and $b_i=\beta|\sigma^{(i)}|/n$. By using \eqref{moment.W}
	and \eqref{estimate-R2}--\eqref{estimate-R5}, we have
	\begin{equation}\label{estimate.conditional.variance03} 
		\E(R_{2}^2+R_{3}^2+R_{5}^2)\le C(N,\beta)n^5.
	\end{equation}
	Th\`{a}nh and Tu \cite[p. 9]{thanh2019error} proved that
	\begin{equation}\label{estimate.conditional.variance05} 
		\E R_{4}^2\le C(N,\beta)n^5.
	\end{equation}
	Combining \eqref{estimate.conditional.variance01}, \eqref{estimate.conditional.variance03} and \eqref{estimate.conditional.variance05} yields
	\begin{equation}\label{estimate.conditional.variance06}
		\E\left(\frac{1}{2\tau}\E((W_{n}/B-W_{n}'/B)^2|\sigma)-1\right)^2\le \dfrac{C(N,\beta)}{n}.
	\end{equation}
	Since the $\sigma$-field generated by $\sigma$ contains the $\sigma$-field generated by $W_n$, we obtain from elementary computations and \eqref{estimate.conditional.variance06} that
	\begin{equation}\label{estimate.conditional.variance07}
		\begin{split}
			\E\left(\frac{1}{2\tau}\E((W_{n}/B-W_{n}'/B)^2|W_{n})-1\right)^2&\le \E\left(\frac{1}{2\tau}\E((W_{n}/B-W_{n}'/B)^2|\sigma)-1\right)^2\\
			&\le \dfrac{C(N,\beta)}{n}.
		\end{split}
	\end{equation}
	For all $x>0$, we have $0<f'(x)<f(x)/x$ (see, e.g., \cite[Equation (A.4)]{thanh2019error}). Since $1/\beta=f(b)/b$, elementary computations show
	$0<1-\beta f'(b)<1$.
	From \eqref{bounded-pair-meanfield} and \eqref{nonlinear-meanfield}, we are able to apply 
	Theorem \ref{thm.main11} with 
	\begin{equation}\label{define-a-tau}
		W=\dfrac{W_n}{B},\ a= \dfrac{4\beta^2}{\sqrt{n} b^2},\ \tau=\dfrac{1-\beta f'(b)}{n}, \text{ and }R=\dfrac{(\beta f'(b)-1)\E(R^{*}|W_n)}{n}.
	\end{equation}
	Using \eqref{estimate-moment-R*} and \eqref{define-a-tau}, we obtain
	\begin{equation}\label{estimate-moment-R}
		\begin{aligned}
			\dfrac{\sqrt{\E R^2}}{\tau}&\le \dfrac{C(\beta)}{\sqrt{n}} \text{ and } a+\dfrac{a^3}{\tau}\le \dfrac{C(\beta)}{\sqrt{n}}. 
		\end{aligned}
	\end{equation}
	By collecting the bounds \eqref{moment.W}, \eqref{estimate.conditional.variance07} and \eqref{estimate-moment-R}, we obtain \eqref{ap2}.
	The proof of the theorem is completed.
\end{proof}

\subsection{Jack Measures}\label{subsec.jack}

In this subsection, we apply Theorem \ref{thm.main11} to obtain a non-uniform Berry--Esseen bound for Jack$_{\alpha}$ measure.
Let $\mathcal{P}_n$ be the set of all partitions of positive integer $n$ and let $\alpha>0$. For each $\lambda\in \mathcal{P}_n$,
we define Jack$_\alpha$ measure of $\lambda$ to be
$$\mathbb{P}_{\alpha}(\lambda)=\dfrac{\alpha^n n!}{\Pi_{s\in \lambda}(\alpha a(s)+l(s)+1)(\alpha a(s)+l(s)+\alpha)},$$
where the product is over all boxes in the partition. Here $a(s)$ denotes the number of boxes in the
same row of $s$ and to the right of $s$ (the ``arm'' of $s$) and $l(s)$
denotes the number of boxes in the same column of $s$ and below $s$ (the ``leg'' of $s$).
For example, the top-left box (the northwest corner box) $s$ of the partition of $4$
\begin{equation*}
	\lambda=
	\begin{array}{lrc}
		\Box\  \ \Box\  \ \Box\\
		\Box
	\end{array}
\end{equation*}
has $a(s)=2$ and $l(s)=1$. This partition
has Jack$_\alpha$ measure
$$\mathbb{P}_{\alpha}(\lambda)=\dfrac{6\alpha}{(3\alpha+1)(\alpha+1)^2}.$$
It can be verified that Jack$_\alpha$ measure is a probability measure on $\mathcal{P}_n$ (see Remark \ref{rem.Jackmeasure}).
This measure was introduced by Kerov \cite{kerov2000anisotropic} and has interesting connections to the Gaussian $\beta$-Ensemble
in the random matrix theory with $\beta=2/\alpha$. This research direction has recently been explored in several papers
(see \cite{borodin2005z,dolega2016gaussian,fulman2004stein,guionnet2019rigidity,huang2021law}
and the references therein). Jack$_\alpha$ measure was also presented as an important area of research in a survey paper on random partitions 
by Okounkov \cite{okounkov2006uses}. For the case where $\alpha=1$, Jack$_{\alpha}$ measure agrees with the Plancherel measure on the set of all the irreducible representations of the symmetric group $S_n$.

Let $\alpha>0$ and
\begin{equation}\label{fulman30}
	W_{n,\alpha}=W_{n,\alpha}(\lambda)=\dfrac{\sum_{i}\Big(\alpha{{\lambda_i}
			\choose 2}-{{\lambda_{i}^{'}} \choose 2}\Big)}{\sqrt{\alpha{n
				\choose 2}}},
\end{equation} 
where $\lambda$ is chosen from the Jack$_\alpha$
measure on partitions of size $n$, $\lambda_i$ is the length of the
$i$-th row of $\lambda$ and $\lambda_{i}^{'}$ is the length of the
$i$-th column of $\lambda$. When $\alpha=1$, $W_{n,1}$ coincides with the character ratio
of the symmetric group on transpositions (see, e.g., Chen et al. \cite[p. 444]{chen2021error}).
Fulman \cite{fulman2004stein} used Stein's method for exchangeable pairs and  proved that
\begin{equation}\label{fulman31}
	\sup_{x\in {\mathbb{R}}}|\mathbb{P}_{\alpha}(W_{n,\alpha}\le
	x)-\Phi(x)|\le \dfrac{C(\alpha)}{n^{1/4}}. 
\end{equation}
The rate in \eqref{fulman31} was sharpened by 
Fulman \cite{fulman2006inductive} to $C(\alpha)n^{-1/2}$ using the inductive approach to Stein's method. 
We note that in all these results, $\alpha>0$ is fixed, but we do not know how $C(\alpha)$ depends on $\alpha$. 
An explicit constant is obtained by
Shao and Su \cite{shao2006berry} only when $\alpha=1$. 
Fulman \cite{fulman2004stein} conjectured that for general $\alpha\ge 1$, the correct bound
for the Kolmogorov distance is a universal constant multiplied by $\max\left\{1/\sqrt{n},\sqrt{\alpha}/n\right\}$.
Using Stein's method and zero-bias couplings, Fulman and 
Goldstein \cite{fulman2011zero} proved that it is the correct bound for the Wasserstein distance. 
More recently, Chen et al. \cite{chen2021error} achieved the rate conjectured by Fulman up to a $\log n$ factor.
When $\alpha$ is fixed, Chen et al. \cite{chen2021error}
confirmed Fulman's conjecture by proving that
\begin{equation*}
	\sup_{x\in {\mathbb{R}}}|\mathbb{P}_{\alpha}(W_{n,\alpha}\le
	x)-\Phi(x)|\le \dfrac{8.2}{\sqrt{n}}.
\end{equation*} 

As remarked by Fulman \cite{fulman2004stein}, one is usually interested in 
$\alpha$ being fixed, as $\alpha$ is a parameter which represents
the symmetries of the system. In this subsection, we consider the case
$\log^2n/n \le \alpha \le n/\log^2n$ and provide a non-uniform Berry--Esseen bound for
$W_{n,\alpha}$ by applying Theorem \ref{thm.main11}. The same result was also achieved by Chen et al. \cite{chen2021error} using Stein's method for zero-biased couplings.

\begin{theorem}\label{thm.Jack} 
	Let $p\ge2$, $n\ge 3$, $\log^2n/n\le \alpha\le n/\log^2n$ and $W_{n,\alpha}$ in \eqref{fulman30}. 
	Then for all  $z\in {\mathbb{R}}$, we have
	\begin{equation}\label{Fu01}
		\left|{\mathbb{P}}_{\alpha}(W_{n,\alpha}\le
		z)-\Phi(z)\right|\le \dfrac{C(p)}{(1+|z|)^{p}\sqrt{n}}.
	\end{equation}
\end{theorem}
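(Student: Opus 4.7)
The plan is to apply Theorem \ref{thm.main11} to $W=W_{n,\alpha}$ with an exchangeable pair $(W,W')$ constructed from the natural Markov chain on partitions associated with Jack$_\alpha$ measure, following Fulman \cite{fulman2004stein,fulman2006inductive}. Given $\lambda\sim\mathbb{P}_\alpha$, one obtains $\lambda'$ by removing a corner box according to the weights defined through the Jack polynomial structure and then reattaching a box according to the appropriate conditional distribution. This coupling yields an exchangeable pair whose linear regression relation is classical: one has $\E(W'|\lambda)=(1-\tau)W+R$ with $\tau$ of order $1/\binom{n}{2}$ and an explicit remainder $R$ that can be read off from the identities in \cite{fulman2004stein}. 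The first step is to extract $\tau$ and $R$ from those identities and verify that $\sqrt{\E R^2}/\tau\le C(p)/\sqrt{n}$ throughout the range $\log^2 n/n\le\alpha\le n/\log^2 n$.

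Next I would bound $\E|W_{n,\alpha}|^{2p}$ uniformly in the given range of $\alpha$. This can be carried out either by a bootstrap using the same exchangeable-pair identity, in the spirit of Fulman's inductive approach \cite{fulman2006inductive}, or by appealing to the tail estimates of Chen et al.\ \cite{chen2021error}, where sub-Gaussian type bounds for $W_{n,\alpha}$ are already implicit. The companion quantity $\E\bigl(1-\frac{1}{2\tau}\E(\Delta^2|W)\bigr)^2$ should, after the algebraic manipulations used by Fulman, reduce to a sum of squared content-type statistics whose expectation is of order $1/n$; I would bound it by the same second-moment techniques that Fulman, Goldstein \cite{fulman2011zero} and Chen et al.\ \cite{chen2021error} use for their $L^1$-type estimates, carried out one integration further.

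The main obstacle is that $|\Delta|$ is \emph{not} bounded: a box relocation between a very long row and a very long column can produce a jump in $W_{n,\alpha}$ of order $\lambda_1/\sqrt{\alpha\binom{n}{2}}$, which can be large when $\alpha$ is near its lower bound. This is precisely the scenario the authors highlight as motivation for allowing unbounded $\Delta$ in Theorem \ref{thm.main11}. To handle it I would choose a truncation level $a$ of order $1/\sqrt{n}$ (up to a polylogarithmic factor if needed) so that $a+a^3/\tau\le C/\sqrt{n}$, and then control $\sqrt{\E\Delta^4\mathbf{1}(|\Delta|>a)}/\tau$ using high-moment tail estimates for $\lambda_1$ and $\lambda_1'$ under $\mathbb{P}_\alpha$; in the stated range of $\alpha$ these yield enough polynomial decay in $n$ to absorb the $\tau^{-1}\asymp n$ factor while keeping the contribution of order $1/\sqrt{n}$. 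Combining the four estimates via Theorem \ref{thm.main11} then delivers \eqref{Fu01}. The restrictions on $\alpha$ enter precisely to ensure that this unbounded-jump contribution does not dominate, which is where the concentration-based Theorem \ref{thm.main11}, rather than Theorem \ref{thm.eich}, is essential.
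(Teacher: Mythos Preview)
Your overall plan---apply Theorem \ref{thm.main11} to the Fulman exchangeable pair, truncate $\Delta$ at level $a\asymp 1/\sqrt n$, control the truncated fourth moment via tail estimates, and bound $\E|W|^{2p}$ and the conditional-variance term by existing machinery---is exactly what the paper does. But your description contains a concrete numerical error that would make the argument collapse as written.

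You say $\tau$ is of order $1/\binom{n}{2}$. In fact Fulman's Proposition~6.1 gives the \emph{linear} regression condition \eqref{linear} with $\tau=2/n$ and $R\equiv 0$. This matters: with your stated $\tau\asymp n^{-2}$ and $a\asymp n^{-1/2}$, one gets $a^3/\tau\asymp n^{1/2}$, so the claim ``$a+a^3/\tau\le C/\sqrt n$'' is false. With the correct $\tau=2/n$ everything balances and the remainder term is vacuous, so your first step (extracting $R$ and showing $\sqrt{\E R^2}/\tau\le C(p)/\sqrt n$) disappears. The conditional-variance bound then comes directly from Fulman's Proposition~6.5, giving $\E\bigl(1-\tfrac{1}{2\tau}\E(\Delta^2|W)\bigr)^2\le (3n+2\alpha)/(4n(n-1))$, with no extra $L^2$ work needed.

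Two further points where you should align with the paper. First, for $\E|W_{n,\alpha}|^{2p}\le C(p)$ the paper invokes a moment inequality for exchangeable pairs (Th\`anh \cite{thanh2024moment}): under \eqref{linear}, $\E|W|^{2p}\le C(p)\,\E|\Delta|^{2p}/\tau^{p}$, and $\E|\Delta|^{2p}$ is controlled by the same tail bound you cite plus the crude deterministic bound $|\Delta|\le 4\sqrt{\alpha}$. Second, the paper treats only $1\le\alpha\le n/\log^2 n$ directly and then covers $\log^2 n/n\le\alpha<1$ by the duality $\mathbb{P}_\alpha(W_{n,\alpha}=z)=\mathbb{P}_{1/\alpha}(W_{n,1/\alpha}=-z)$; your proposal does not address this reduction, and the tail estimates you plan to use are stated (in \cite{chen2021error}) in a form tailored to $\alpha\ge 1$, so you would need this symmetry argument as well.
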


\begin{proof}
	Firstly, consider the case where $1\le \alpha\le n/\log^2 n$. Kerov \cite{kerov2000anisotropic}
	proved that there is a growth process giving a sequence of partitions		
	$(\lambda(1),\dots,\lambda(n))$ with partition $\lambda(j)$ distributed
	according to the Jack$_\alpha$ measure on the set of partitions of size $j$.
	Given the Kerov growth process, let
	$X_{1,\alpha}=0$, $X_{j,\alpha}=c_{\alpha}(x)$ where $x$ is the box
	added to $\lambda(j-1)$ to obtain $\lambda(j)$ and the
	``$\alpha$-content'' $c_\alpha(x)$ of a box $x$ is defined to be
	$\alpha(\text{column number of }x-1)- (\text{row number of }x-1)$, $j\ge 2$. 
	Then, from Fulman \cite{fulman2006inductive}, one can write
	\begin{equation*}
		W_{n,\alpha}=\dfrac{\sum_{j=1}^n X_{j,\alpha}}{\sqrt{\alpha{n\choose 2}}}.
	\end{equation*} 
	Therefore, constructing $\nu$ from the
	Jack$_\alpha$ measure on partitions of $n-1$ and then taking
	one step in Kerov's growth process yields $\lambda$ with the
	Jack$_\alpha$ measure on partitions of $n$, we have 
	\begin{equation*}\label{FG6}
		W_{n,\alpha}=V_{n,\alpha}+\eta_{n,\alpha},
	\end{equation*} where
	\begin{equation*}\label{FG7}
		V_{n,\alpha}=\dfrac{\sum_{x\in\nu}c_{\alpha}(x)}{\sqrt{\alpha{n\choose
					2}}}= \sqrt{\dfrac{n-2}{n}} W_{n-1,\alpha},\text{  }
		\eta_{n,\alpha}=\dfrac{X_{n,\alpha}}{\sqrt{\alpha{n \choose
					2}}}=\dfrac{c_{\alpha}(\lambda/\nu)}{\sqrt{\alpha{n \choose 2}}},
	\end{equation*}
	and $c_{\alpha}(\lambda/\nu)$ denotes the $\alpha$-content of the box
	added to $\nu$ to obtain $\lambda$.
	As in Fulman \cite{fulman2004stein}, we construct $\lambda'$ by taking another step in the Kerov growth process from $\nu$, independently of $\lambda/\nu$ given $\nu$, 
	and then forming $W_{n,\alpha}'$ from $\lambda'$ as $W_{n,\alpha}$ is formed from $\lambda$, that is
	\begin{equation*}\label{FG8}
		W_{n,\alpha}'=V_{n,\alpha}+\eta_{n,\alpha}',
	\end{equation*} where
	\begin{equation*}\label{FG9}
		\eta_{n,\alpha}'=\dfrac{X_{n,\alpha}'}{\sqrt{\alpha{n \choose
					2}}}=\dfrac{c_{\alpha}(\lambda'/\nu)}{\sqrt{\alpha{n \choose 2}}},
	\end{equation*} 
	and $c_{\alpha}(\lambda'/\nu)$ denotes the $\alpha$-content of the box
	added to $\nu$ to obtain $\lambda'$. 
	Fulman \cite[Proposition 6.1]{fulman2004stein} showed that
	$(W_{n,\alpha}, W_{n,\alpha}')$ is an exchangeable pair
	that satisfies \eqref{linear} with $\tau=2/n$. Let
	\[\Delta_{n,\alpha}=W_{n,\alpha}-W_{n,\alpha}'=\eta_{n,\alpha}-\eta_{n,\alpha}'.\]
	For all $q>1$, Chen et al. \cite{chen2021error}
	(see (4.20) ibidem) proved that
	\begin{equation}\label{Fu70}
		{\mathbb{P}}_{\alpha}\left(|\eta_{n,\alpha}|> \dfrac{e\sqrt{2q}}{\sqrt{n-1}}\right)\le  \dfrac{\alpha}{\pi(q-1)q^{e\sqrt{qn/\alpha}}}.
	\end{equation}
	Similarly,
	\begin{equation}\label{Fu71}
		{\mathbb{P}}_{\alpha}\left(|\eta_{n,\alpha}'|> \dfrac{e\sqrt{2q}}{\sqrt{n-1}}\right)\le  \dfrac{\alpha}{\pi(q-1)q^{e\sqrt{qn/\alpha}}}.
	\end{equation}
	Combining \eqref{Fu70} and \eqref{Fu71} yields
	\begin{equation}\label{Fu73}
		\begin{aligned}
			{\mathbb{P}}_{\alpha}\left(|\Delta_{n,\alpha}|>\dfrac{2e\sqrt{2q}}{\sqrt{n-1}}\right)&\le {\mathbb{P}}_{\alpha}\left(|\eta_{n,\alpha}'|> \dfrac{e\sqrt{2q}}{\sqrt{n-1}}\right)+	{\mathbb{P}}_{\alpha}\left(|\eta_{n,\alpha}'|> \dfrac{e\sqrt{2q}}{\sqrt{n-1}}\right)\\
			&\le  \dfrac{2\alpha}{\pi(q-1)q^{e\sqrt{qn/\alpha}}}.
		\end{aligned}
	\end{equation}
	Let
	\[q=(p+e)^2,\ a=\dfrac{2e\sqrt{2q}}{\sqrt{n-1}}.\]
	By applying \eqref{Fu73} with noting that $n/\alpha\ge \log^2n$, we have
	\begin{equation}\label{Fu77}
		\begin{aligned}
			{\mathbb{P}}_{\alpha}\left(|\Delta_{n,\alpha}|>a\right)
			&\le \dfrac{\alpha}{e^{2e(p+e)\log n}}= \dfrac{\alpha}{n^{2e(p+e)}}.
		\end{aligned}
	\end{equation}
	It is clear from the definition of the content of a box that 
	\begin{equation}\label{Fu78}
		|c_{\alpha}(\lambda/\nu)|\le \max \left\{\alpha(\lambda_{1}-1),(\lambda_{1}'-1)\right\},
	\end{equation}
	where $\lambda_{1}$ and $\lambda_{1}'$ denote the length of the first row
	and the length of the first column of $\lambda$, respectively. 
	By \eqref{Fu78}, we have $|c_{\alpha}(\lambda/\nu)|\le \alpha n$. Similarly, $|c_{\alpha}(\lambda'/\nu)|\le \alpha n$.
	Therefore
	\begin{equation}\label{Fu80}
		\begin{aligned}
			|\Delta_{n,\alpha}|&\le \dfrac{c_{\alpha}(\lambda/\nu)}{\sqrt{\alpha{n \choose 2}}}+\dfrac{c_{\alpha}(\lambda'/\nu)}{\sqrt{\alpha{n \choose 2}}}\le 4\sqrt{\alpha}.
		\end{aligned}
	\end{equation}
	Combining \eqref{Fu77} and \eqref{Fu80} yields
	\begin{equation}\label{Fu81}
		\E\left(|\Delta_{n,\alpha}|^4\mathbf{1}(|\Delta_{n,\alpha}|>a)\right)\le \dfrac{4^4\alpha^3}{n^{2e(p+e)}}\le \dfrac{256}{n^{2e(p+e)}}\left(\dfrac{n}{\log^2 n}\right)^3\le \dfrac{256}{n^{2ep}}
	\end{equation}
	and
	\begin{equation}\label{Fu82}
		\begin{aligned}
			\E\left(|\Delta_{n,\alpha}|^{2p}\right)&\le a^{2p}+ 4^{2p}\alpha^p {\mathbb{P}}_{\alpha}\left(|\Delta_{n,\alpha}|>a\right)\\
			&\le C(p)\left(\dfrac{1}{n^p}+\dfrac{\alpha^{p+1}}{n^{2e(p+e)}}\right)\\
			&\le \dfrac{C(p)}{n^p}.
		\end{aligned}
	\end{equation}
	
	Recall that $(W_{n,\alpha}, W_{n,\alpha}')$ is an exchangeable pair
	that satisfies \eqref{linear} with $\tau=2/n$ (see Fulman \cite[Proposition 6.1]{fulman2004stein}).
	We are now able to apply Theorem \ref{thm.main11} with $a=2e\sqrt{2}(p+e)/\sqrt{n-1}$ and $R=0$.
	Th\`{a}nh \cite{thanh2024moment} proved that if $(W,W')$ is an exchangeable pair satisfying \eqref{linear}, then
	\begin{equation*}
		\E|W|^{2p}\le \dfrac{C(p)\E|W-W'|^{2p}}{\tau^{p}}.
	\end{equation*}
	Applying this result for $(W_{n,\alpha}, W_{n,\alpha}')$, $\tau=2/n$, and using \eqref{Fu82}, we obtain
	\begin{equation}\label{moment.thanh}
		\E|W_{n,\alpha}|^{2p}\le C(p).
	\end{equation}
	From Proposition 6.5 of Fulman \cite{fulman2004stein}, we have
	\begin{equation}\label{Fu85}
		\E\left(1- \dfrac{1}{2\tau} \E(\Delta_{n,\alpha}^2|W_{n,\alpha})\right)^2\le \dfrac{3n+2\alpha}{4n(n-1)}\le \dfrac{5}{4(n-1)}.
	\end{equation}
	We also have
	\begin{equation}\label{Fu87}
		a+\dfrac{a^3}{\tau}\le \dfrac{2e\sqrt{2}(p+e)}{\sqrt{n-1}}+\left(\dfrac{2e\sqrt{2}(p+e)}{\sqrt{n-1}}\right)^3\dfrac{n}{2}\le \dfrac{C(p)}{\sqrt{n}}.
	\end{equation}
	By collecting the bounds in \eqref{Fu81}, \eqref{moment.thanh}--\eqref{Fu87}, we obtain \eqref{Fu01}.
	
	For the case where $\log^2n/n\le \alpha<1$, we have $1<1/\alpha\le n/\log^2n$. We then obtain \eqref{Fu01} by noting that (see Chen et al. \cite[p. 464]{chen2021error})
	\[\mathbb{P}_{\alpha}(W_{n,\alpha}=z)=\mathbb{P}_{1/\alpha}(W_{n,1/\alpha}=-z).\]
\end{proof}

\begin{remark}
	In view of \eqref{Fu80}, we can not apply Theorem \ref{thm.eich} (Theorem 2.3 of Eichelsbacher \cite{eichelsbacher2024stein}) to obtain a useful bound.
\end{remark}

\appendix
\section{}\label{sec.proof.lem} 


In this section, we will present a remark on Jack$_\alpha$ measures, and the proofs of Lemmas \ref{lem.bound.moment.W}--\ref{lem.bsg}.

\begin{remark}\label{rem.Jackmeasure}
	Let $\mathcal{P}_n$ be the set of all partitions of size $n$. Jack$_\alpha$ measure is a probability measure on $\mathcal{P}_n$, that is,
	\begin{equation}\label{jack.probability.measure}
		\sum_{\lambda\in \mathcal{P}_n}\mathbb{P}_{\alpha}(\lambda)=1.
	\end{equation}
	This follows from Equation (10.32) in Macdonald \cite[VI.10]{macdonald1995symmetric}.
	We present it here for completeness.
	We use the notation from Macdonald \cite{macdonald1995symmetric}.
	For $\lambda\in\mathcal{P}_n$, let $m_i(\lambda)$ be the number of rows of
	$\lambda$ of size $i$, let $l(\lambda)$ denote the total number of rows of $\lambda$, and let
	\[z_{\lambda}=\Pi_{i\ge 1}i^{m_i(\lambda)}m_i(\lambda)!,\]
	\[c_{\lambda}(\alpha)=\Pi_{s\in \lambda}(\alpha a(s)+l(s)+1),\]
	and
	\[c_{\lambda}'(\alpha)=\Pi_{s\in \lambda}(\alpha a(s)+l(s)+\alpha).\]
	Let $(1^n)$ be the partition $(1,\ldots,1)$ of $n$. Then $m_1(1^n)=n$, $m_i(1^n)=0$ for all $i\ge 2$, $l(1^n)=n$ and $z_{(1^n)}=n!$. From Equation (10.32)
	in Macdonald \cite[VI.10]{macdonald1995symmetric}, by choosing $\rho=\sigma=(1^n)$ and using (10.29) ibidem, we obtain
	\[\sum_{\lambda\in \mathcal{P}_n}\dfrac{1}{c_{\lambda}(\alpha)c_{\lambda}'(\alpha)}=\dfrac{1}{\alpha^n n!},\]
	which is equivalent to
	\[\sum_{\lambda\in \mathcal{P}_n}\dfrac{\alpha^n n!}{\Pi_{s\in \lambda}(\alpha a(s)+l(s)+1)(\alpha a(s)+l(s)+\alpha)}=1,\]
	thereby establishing \eqref{jack.probability.measure}.
\end{remark}

In the remainder of this section, we will prove Lemmas \ref{lem.bound.moment.W}--\ref{lem.bsg}.

\begin{proof}[Proof of Lemma \ref{lem.bound.moment.W}]
	By Lyapunov's inequality, we have
	\begin{equation}\label{estimate.moment.W03}
		\begin{aligned}
			\E|W+c|^q&\le \max\{1,2^{q-1}\}\left(\E|W|^q+|c|^q\right)\\
			&\le 2^{2p-1}\left((\E|W|^{2p})^{q/(2p)}+100^{2p}\right).
		\end{aligned}
	\end{equation}
	If $\E|W|^{2p}\le 1$, then \eqref{estimate.moment.W01} follows from \eqref{estimate.moment.W03} by
	choosing $C(p)= 200^{2p}$. If $\E|W|^{2p}> 1$, then $(\E|W|^{2p})^{q/(2p)}\le \E|W|^{2p}$.
	Thus, \eqref{estimate.moment.W01} also follows from \eqref{estimate.moment.W03} by
	choosing $C(p)= 200^{2p}$.
\end{proof}

\begin{proof}[Proof of Lemma \ref{lem.concentration.inequality}]
	Let \[f(x) = \begin{cases}
		0 &\text{ for } x < z-2a,  \\
		(1+x+2a)^p(x-z+2a) &\text{ for } z-2a \le x \le z+2a,  \\
		4a(1+x+2a)^p &\text{ for }x > z+2a.  \\ 
	\end{cases} \]
	Then $f$ is continuous, increasing, $f' \geq 0$ on $\mathbb{R}$, and 
	\begin{equation}\label{derivative.of.f}
		\begin{aligned}
			f'(x) \ge (1+x+2a)^p \ge (1+z)^p \text{ for }z-2a \le x \le z+2a.
		\end{aligned}
	\end{equation}
	We have from \eqref{s} that
	\begin{equation}\label{sp}
		\begin{aligned}
			\E\left((W-W')(f(W)-f(W'))\right)&= 2\tau \E Wf(W)-2\E f(W)R.
		\end{aligned}
	\end{equation}
	We estimate the left hand side of $\eqref{sp}$ as follows.
	\begin{equation}\label{LHS} 
		\begin{aligned}
			\E\left((W-W')(f(W)-f(W'))\right)
			&= \E\Delta \int_{-\Delta}^0 f'(W+t)\dx t \\
			&\ge \E\Delta \int_{-\Delta}^0 \mathbf{1}(|t|\le a)\mathbf{1}(z-a\le W \le z+a)f'(W+t)\dx t \\
			&\ge \E\Delta \int_{-\Delta}^0 \mathbf{1}(|t|\le a)\mathbf{1}(z-a \le W \le z+a)(1+z)^p\dx t \\
			&\ge (1+z)^p\E\Delta \int_{-\Delta}^0 \mathbf{1}(|\Delta|\le a)\mathbf{1}(z-a \le W \le z+a)\dx t \\
			&= (1+z)^p \E\left(\Delta^2\mathbf{1}(|\Delta|\le a)\mathbf{1}(z-a \le W \le z+a)\right),
		\end{aligned}
	\end{equation}
	where we have applied \eqref{derivative.of.f} in the second inequality.
	For the right hand side of $\eqref{sp}$, we have
	\begin{equation}\label{RHS} \begin{aligned} 
			2\tau \E Wf(W)-2\E f(W)R
			&\le 2\tau \E(4a |1+W+2a|^p|W|)+2\E(4a|1+W+2a|^p|R|)\\
			&\le C(p)a \left(\tau\sqrt{\E|W+1+2a|^{2p}\E W^2}+\sqrt{\E|W+1+2a|^{2p}}\sqrt{\E R^2}\right) \\
			&\le C(p)(1+\E|W|^{2p})a(\tau+\sqrt{\E R^2}),
		\end{aligned} 
	\end{equation}
	where we have applied the definition of $f$ in the first inequality, the Cauchy--Schwarz inequality in the second inequality,
	and Lemma \ref{lem.bound.moment.W} in the third inequality.	Combining \eqref{sp}, \eqref{LHS} and \eqref{RHS} yields \eqref{T322a}.
	
	The proof of the lemma is completed. 
\end{proof}

The proofs of Lemmas \ref{lem.bound.solutions1} and \ref{lem.bsg} are 
quite technical and similar to those of Lemmas 5.1 and 5.2 of Chen and Shao \cite{chen2001non}.
We use
the following well-known inequality (see, e.g., Chen et al. \cite[p. 16]{chen2011normal}):
\begin{equation}\label{mills.ratio}
	1-\Phi(z)\le \min\left(\dfrac{1}{2},\dfrac{1}{z\sqrt{2\pi}}\right)e^{-z^2/2},\ z>0.
\end{equation}

\begin{proof}[Proof of Lemma \ref{lem.bound.solutions1}]
	By Lemma 2.2 of Chen et al. \cite{chen2011normal}, we have 
	\begin{equation}\label{sol03}
		f_{z}(w)  =
		\begin{cases}
			\sqrt{2\pi}e^{w^2/2}(1-\Phi(w))\Phi(z),  &\text{ if } w> z,\\  
			\sqrt{2\pi}e^{w^2/2}\Phi(w)(1- \Phi(z)), &\text{ if } w \le z.\\
		\end{cases}
	\end{equation}
	By using \eqref{mills.ratio}, we obtain
	$$e^{w^2/2}(1-\Phi(w)) \le \dfrac{1}{2} \text{ for } w\ge 0.$$
	Therefore, for all $w\le 0$,
	\begin{equation}\label{sol05}
		\begin{aligned}
			f_{z}(w)& = \sqrt{2\pi}e^{w^2/2}\Phi(w)(1- \Phi(z))\\
			& =\sqrt{2\pi}e^{w^2/2}(1-\Phi(-w))(1- \Phi(z))\\
			& \le \dfrac{\sqrt{2\pi}}{2}(1-\Phi(z)).
		\end{aligned}
	\end{equation}
	By Lemma 2.3 of Chen et al. \cite{chen2011normal}, we also have 
	\begin{equation}\label{sol07}
		0 < f_{z}(w) \le \sqrt{2\pi}/4,\ w\in\R.
	\end{equation}
	To bound $\E(f_{z}(W))^2$, we write 
	\begin{equation}\label{estimate.lem1.03}
		\E (f_{z}(W))^2 =I_1+I_2+I_3,
	\end{equation}
	where
	\begin{equation*}
		\begin{aligned}
			I_1&=\E (f_{z}(W))^2\mathbf{1}(W\le 0),\
			I_2=\E (f_{z}(W))^2\mathbf{1}(W>z/2),\
			I_3=\E (f_{z}(W))^2\mathbf{1}(0<W\le z/2).
		\end{aligned}
	\end{equation*}
	By noting that $z>5$ and using \eqref{mills.ratio}, \eqref{sol05}, we have
	\begin{equation}\label{estimate.lem1.05}
		\begin{aligned}
			I_1&\le \left(\frac{\sqrt{2\pi}}{2}\right)^2(1-\Phi(z))^2 \\
			&\le \left(\dfrac{e^{-z^2/2}}{2z}\right)^2\\
			&\le \dfrac{C(p)}{(1+z)^{2p}}.
		\end{aligned} 
	\end{equation}
	By using $\E|W|^{2p}\le c_0<\infty$, \eqref{sol07}, Markov's inequality and Lemma \ref{lem.bound.moment.W}, we have
	\begin{equation}\label{estimate.lem1.07}
		\begin{aligned}
			I_2&\le \left(\frac{\sqrt{2\pi}}{4}\right)^2 \mathbb{P}(|1+W|>1+z/2)\\
			&\le \left(\frac{\sqrt{2\pi}}{4}\right)^2 \frac{\E|1+W|^{2p}}{(1+z/2)^{2p}}\\
			&\le \dfrac{C(p)(1+\E|W|^{2p})}{(1+z)^{2p}}.
		\end{aligned} 
	\end{equation}
	By noting that $z>5$ and using \eqref{mills.ratio} and \eqref{sol03}, we have
	\begin{equation}\label{estimate.lem1.08}
		\begin{aligned}
			I_3&\le 2\pi(1-\Phi(z))^2\E \left(e^{W^2}\mathbf{1}(0 < W \le z/2)\right)\\
			&\le \dfrac{\pi e^{-z^2}e^{z^2/4}}{2}\\
			&=\dfrac{\pi e^{-3z^2/4}}{2}\le \dfrac{C(p)}{(1+z)^{2p}}.
		\end{aligned} 
	\end{equation}
	Combining \eqref{estimate.lem1.03}--\eqref{estimate.lem1.08} yields \eqref{estimate.lem1.01}.
	
	The proof of \eqref{estimate.lem1.02} can proceed similarly. Since $0\le |\xi| \le |\Delta|$,
	\begin{equation}\label{sol12}
		\E|\xi|^{2p} \le \E|W-W'|^{2p} \le 2^{2p-1} (\E|W|^{2p}+\E|W'|^{2p})=2^{2p}\E|W|^{2p}.
	\end{equation}
	From Stein \cite[p. 23--24]{stein1986approximate}, we have following facts:
	\begin{equation}\label{sol13}
		0<f_{z}'(w)  < (1-\Phi(z))\text{ for }w\le 0,
	\end{equation}
	\begin{equation}\label{sol15}
		f_{z}'(w) = (1-\Phi(z))(1+\sqrt{2\pi}w e^{w^2/2}\Phi(w)) \text{ for } w\le z,
	\end{equation}
	and
	\begin{equation}\label{sol17}
		|f_{z}'(w)| \le 1 \text{ for all } w.
	\end{equation}
	To bound $\E(f_{z}'(W+\xi))^2$, we write 
	\begin{equation}\label{estimate.lem1.13}
		\E(f_{z}'(W+\xi))^2 =J_1+J_2+J_3,
	\end{equation}
	where
	\begin{equation*}
		\begin{aligned}
			J_1&=\E (f_{z}'(W+\xi))^2\mathbf{1}(W+\xi\le 0),\\
			J_2&=\E (f_{z}'(W+\xi))^2\mathbf{1}(W+\xi>z/2),\\
			J_3&=\E (f_{z}'(W+\xi))^2\mathbf{1}(0<W+\xi\le z/2).
		\end{aligned}
	\end{equation*}
	By noting that $z>5$ and using \eqref{mills.ratio} and \eqref{sol13}, we have
	\begin{equation}\label{estimate.lem1.15}
		\begin{aligned}
			J_1&\le (1-\Phi(z))^2 \le \left(\dfrac{e^{-z^2/2}}{2}\right)^2\le \dfrac{C(p)}{(1+z)^{2p}}.
		\end{aligned} 
	\end{equation}
	By using $\E|W|^{2p}<\infty$, \eqref{sol12}, \eqref{sol17} and Markov's inequality, we have
	\begin{equation}\label{estimate.lem1.17}
		\begin{aligned}
			J_2&\le \mathbb{P}(|1+W+\xi|>1+z/2)\\
			&\le \frac{\E|1+W+\xi|^{2p}}{(1+z/2)^{2p}}\\
			&\le 3^{2p-1}\dfrac{(\E|W|^{2p}+\E|\xi|^{2p}+1)}{(1+z/2)^{2p}} \\
			&\le \dfrac{C(p)(1+\E|W|^{2p})}{(1+z)^{2p}}.
		\end{aligned} 
	\end{equation}
	By noting that $z>5$ and using \eqref{mills.ratio} and \eqref{sol15}, we have
	\begin{equation}\label{estimate.lem1.18}
		\begin{aligned}
			J_3&\le (1-\Phi(z))^2\E \left((1+\sqrt{2\pi}(W+\xi)e^{(W+\xi)^2/2})^2\mathbf{1}(0 < W+\xi \le z/2)\right)\\
			&\le \dfrac{ e^{-z^2}(1+\sqrt{2\pi}ze^{z^2/8})^2}{4}\\
			&\le \dfrac{C(p)}{(1+z)^{2p}}.
		\end{aligned} 
	\end{equation}
	Combining \eqref{estimate.lem1.13}--\eqref{estimate.lem1.18} yields \eqref{estimate.lem1.02}.
	The proof of the lemma is completed.
\end{proof}

\begin{proof}[Proof of Lemma \ref{lem.bsg}]
	The proof of Lemma \ref{lem.bsg} is similar to those of \eqref{estimate.lem1.01} and \eqref{estimate.lem1.02}.
	From definition of $g_{z}$, we have (see Chen and Shao \cite{chen2001non}, p. 248)
	\begin{equation}\label{estimate.lem1.24}
		\begin{aligned}
			g_{z}(w)  =
			\begin{cases}
				\left( \sqrt{2\pi}(1+w^2)e^{w^2/2}(1-\Phi(w)) - w \right)\Phi(z) &\text{ if } w\ge z,\\  
				\left( \sqrt{2\pi}(1+w^2)e^{w^2/2}\Phi(w)+w\right) (1- \Phi(z)) &\text{ if } w < z.\\
			\end{cases}
		\end{aligned} 
	\end{equation}
	Chen and Shao \cite[p.249]{chen2001non} proved that 
	\begin{equation}\label{estimate.lem1.25}
		\begin{aligned}
			g_{z} \ge 0,\ g_{z}(w) \le 2(1-\Phi(z)) \text{ for $w\le 0$ and $g$ is increasing for $0 < w \le z$}.
		\end{aligned} 
	\end{equation}
	By \eqref{sol07}, \eqref{sol17} and Lemma \ref{lem.bound.moment.W}, we have
	\begin{equation}\label{estimate.lem1.21}
		\begin{aligned}
			\E g_{z}^2(W+u) &\le 2\E\left(f_{z}^2(W+u) + (W+u)^2(f_{z}'(W+u))^2\right)\\
			&\le {\pi}/4 + 2\E(W+u)^2 \\
			&\le C(p)(1+\E|W|^{2p}).
		\end{aligned} 
	\end{equation}
	Applying \eqref{estimate.lem1.21}, the Cauchy--Schwarz inequality, Markov's inequality and Lemma \ref{lem.bound.moment.W}, we obtain
	\begin{equation}\label{estimate.lem1.23}
		\begin{aligned}
			\E g_{z}(W+u)\mathbf{1}(W+u > z/2)
			&\le (\E g_{z}^2(W+u))^{1/2}(\E\mathbf{1}(W+u > z/2))^{1/2}\\
			&\le \dfrac{(\E g_{z}^2(W+u))^{1/2}\left(\E|W+2|^{2p}\right)^{1/2}}{(1+z/2)^p}\\
			&\le \dfrac{C(p)(1+\E|W|^{2p})}{(1+z)^p}.
		\end{aligned} 
	\end{equation}
	It thus follows that
	\begin{align*}
		\E g_{z}(W+u)&=\E g_{z}(W+u)\left(\mathbf{1}(W+u \le 0)+\mathbf{1}(0 < W+u \le z/2)+\mathbf{1}(W+u > z/2)\right)\\
		&\le 2(1-\Phi(z)) + g_{z}(z/2) + \E(g_{z}(W+u)\mathbf{1}(W+u> z/2))\\
		&\le 2(1-\Phi(z)) + \left(\sqrt{2\pi}(1+z^2/4)e^{z^2/8}\Phi(z/2)+z/2\right)(1-\Phi(z))+\dfrac{C(p)(1+\E|W|^{2p})}{(1+z)^p}\\
		&\le e^{-z^2/2} + C\left((1+z^2)e^{z^2/8}+z\right)e^{-z^2/2} + \dfrac{C(p)(1+\E|W|^{2p})}{(1+z)^p}\\
		&\le \dfrac{C(p)(1+\E|W|^{2p})}{(1+z)^p},
	\end{align*}
	where we have applied \eqref{estimate.lem1.25} in the first inequality, \eqref{estimate.lem1.24}
	and \eqref{estimate.lem1.23} in the second inequality, and \eqref{mills.ratio} in the third inequality. This ends the proof of the lemma.
\end{proof}

\vskip.2in	
\textbf{Funding:} This work was supported by the National Foundation for Science and Technology Development (NAFOSTED), grant no. 101.03-2021.32.
\small	

\providecommand{\bysame}{\leavevmode\hbox to3em{\hrulefill}\thinspace}
\providecommand{\MR}{\relax\ifhmode\unskip\space\fi MR }
\providecommand{\MRhref}[2]{%
	\href{http://www.ams.org/mathscinet-getitem?mr=#1}{#2}
}
\providecommand{\href}[2]{#2}


\begin{thebibliography}{10}
	
	\bibitem{arfken2005mathematical}
	George Arfken and Hans-Jurgen Weber, \emph{{Mathematical Methods for
			Physicists. Sixth Edition}}, Elsevier Academic Press, Burlington, MA, 2005.
	
	\bibitem{borodin2005z}
	Alexei Borodin and Grigori Olshanski, \emph{{$Z$-measures on partitions and
			their scaling limits}}, European Journal of Combinatorics \textbf{26} (2005),
	no.~6, 795--834.
	
	\bibitem{butzek2024non}
	Marius Butzek and Peter Eichelsbacher, \emph{Non-uniform berry--esseen bounds
		for gaussian, poisson and rademacher processes}, arXiv preprint
	arXiv:2409.09439 (2024).
	
	\bibitem{chen2015error}
	Louis Hsiao~Yun Chen and Xiao Fang, \emph{On the error bound in a combinatorial
		central limit theorem}, Bernoulli \textbf{21} (2015), no.~1, 335--359.
	
	\bibitem{chen2013stein}
	Louis Hsiao~Yun Chen, Xiao Fang, and Qi~Man Shao, \emph{{From Stein identities
			to moderate deviations}}, Annals of Probability \textbf{41} (2013), no.~1,
	262--293.
	
	\bibitem{chen2011normal}
	Louis Hsiao~Yun Chen, Larry Goldstein, and Qi~Man Shao, \emph{Normal
		approximation by stein’s method}, Springer Verlag, Berlin--Heidelberg,
	2011.
	
	\bibitem{chen2023generalized}
	Louis Hsiao~Yun Chen, Arturo Jaramillo, and Xiaochuan Yang, \emph{{A
			generalized Kubilius--Barban--Vinogradov bound for prime multiplicities}},
	ALEA Latin American Journal of Probability and Mathematical Statistics
	\textbf{20} (2023), no.~1, 713--730.
	
	\bibitem{chen2021error}
	Louis Hsiao~Yun Chen, Martin Rai{\v{c}}, and L\^e~V\v{a}n Th\`{a}nh, \emph{{On
			the error bound in the normal approximation for Jack measures}}, Bernoulli
	\textbf{27} (2021), no.~1, 442--468.
	
	\bibitem{chen2001non}
	Louis Hsiao~Yun Chen and Qi~Man Shao, \emph{{A non-uniform Berry--Esseen bound
			via Stein's method}}, Probability Theory and Related Fields \textbf{120}
	(2001), 236--254.
	
	\bibitem{dolega2016gaussian}
	Maciej Do{\l}{\k{e}}ga and Valentin F{\'e}ray, \emph{Gaussian fluctuations of
		young diagrams and structure constants of jack characters}, Duke Mathematical
	Journal \textbf{165} (2016), no.~7, 1193--1282.
	
	\bibitem{dung2024non}
	Nguyen~Tien Dung, Le~Vi, and Pham Thi~Phuong Thuy, \emph{Non-uniform
		berry-esseen bounds via malliavin-stein method}, arXiv preprint
	arXiv:2409.01550 (2024).
	
	\bibitem{eichelsbacher2024stein}
	Peter Eichelsbacher, \emph{{Stein's method and a cubic mean-field model}},
	arXiv preprint arXiv:2404.07587 (2024).
	
	\bibitem{eichelsbacher2010stein}
	Peter Eichelsbacher and Matthias L{\"o}we, \emph{{Stein's method for dependent
			random variables occuring in statistical mechanics}}, Electronic Journal of
	Probability \textbf{15} (2010), no.~30, 962--988.
	
	\bibitem{fang2020refined}
	Xiao Fang, Li~Luo, and Qi~Man Shao, \emph{{A refined Cram{\'e}r-type moderate
			deviation for sums of local statistics}}, Bernoulli \textbf{26} (2020),
	no.~3, 2319--2352.
	
	\bibitem{friedli2017statistical}
	Sacha Friedli and Yvan Velenik, \emph{{Statistical Mechanics of Lattice
			Systems: A Concrete Mathematical Introduction}}, Cambridge University Press.
	Cambridge, 2018.
	
	\bibitem{fulman2004stein}
	Jason Fulman, \emph{{Stein's method, Jack measure, and the Metropolis
			algorithm}}, Journal of Combinatorial Theory, Series A \textbf{108} (2004),
	no.~2, 275--296.
	
	\bibitem{fulman2006inductive}
	\bysame, \emph{{An inductive proof of the Berry--Esseen theorem for character
			ratios}}, Annals of Combinatorics \textbf{10} (2006), 319--332.
	
	\bibitem{fulman2011zero}
	Jason Fulman and Larry Goldstein, \emph{{Zero biasing and Jack measures}},
	Combinatorics, Probability and Computing \textbf{20} (2011), no.~5, 753--762.
	
	\bibitem{guionnet2019rigidity}
	Alice Guionnet and Jiaoyang Huang, \emph{{Rigidity and edge universality of
			discrete $\beta$-ensembles}}, Communications on Pure and Applied Mathematics
	\textbf{72} (2019), no.~9, 1875--1982.
	
	\bibitem{harper2009two}
	Adam Harper, \emph{{Two new proofs of the Erd{\"o}s--Kac Theorem, with bound on
			the rate of convergence, by Stein's method for distributional
			approximations}}, Mathematical Proceedings of the Cambridge Philosophical
	Society \textbf{147} (2009), no.~1, 95--114.
	
	\bibitem{huang2021law}
	Jiaoyang Huang, \emph{{Law of large numbers and central limit theorems through
			Jack generating functions}}, Advances in Mathematics \textbf{380} (2021),
	Paper no. 107545.
	
	\bibitem{kerov2000anisotropic}
	Sergei Kerov, \emph{{Anisotropic Young diagrams and Jack symmetric functions}},
	Functional Analysis and Its Applications \textbf{34} (2000), 41--51.
	
	\bibitem{kirkpatrick2016asymptotics}
	Kay Kirkpatrick and Tayyab Nawaz, \emph{{Asymptotics of mean-field $O(N)$
			models}}, Journal of Statistical Physics \textbf{165} (2016), 1114--1140.
	
	\bibitem{lambert2019quantitative}
	Gaultier Lambert, Michel Ledoux, and Christian Webb, \emph{{Quantitative normal
			approximation of linear statistics of $\beta$-ensembles}}, Annals of
	Probability \textbf{47} (2019), no.~5, 2619--2685.
	
	\bibitem{liu2023cramer}
	Song~Hao Liu and Zhuo~Song Zhang, \emph{Cram{\'e}r-type moderate deviations
		under local dependence}, Annals of Applied Probability \textbf{33} (2023),
	no.~6A, 4747--4797.
	
	\bibitem{macdonald1995symmetric}
	Ian~Grant Macdonald, \emph{{Symmetric functions and Hall polynomials}}, Oxford
	University Press, New York, 1995.
	
	\bibitem{okounkov2006uses}
	Andrei Okounkov, \emph{{The uses of random partitions}}, XIVth International
	Congress on Mathematical Physics, World Scientific, 2006, pp.~379--403.
	
	\bibitem{pekoz2013degree}
	Erol Pek{\"o}z, Adrian R{\"o}llin, and Nathan Ross, \emph{{Degree asymptotics
			with rates for preferential attachment random graphs}}, Annals of Applied
	Probability \textbf{23} (2013), no.~3, 1188--1218.
	
	\bibitem{shao2006berry}
	Qi~Man Shao and Zhong~Gen Su, \emph{{The Berry--Esseen bound for character
			ratios}}, Proceedings of the American Mathematical Society \textbf{134}
	(2006), no.~7, 2153--2159.
	
	\bibitem{shao2019berry}
	Qi-Man Shao and Zhuo-Song Zhang, \emph{{Berry--Esseen bounds of normal and
			nonnormal approximation for unbounded exchangeable pairs}}, Annals of
	Probability \textbf{47} (2019), no.~1, 61--108.
	
	\bibitem{stein1986approximate}
	Charles Stein, \emph{Approximate computation of expectations, lecture
		notes-monograph series 7}, Institute of Mathematical Statistics, Hayward, CA,
	1986.
	
	\bibitem{thanh2024moment}
	L\^{e}~V\v{a}n Th\`{a}nh, \emph{A moment inequality for exchangeable pairs with
		applications}, Preprint (2024), 1--12.
	
	\bibitem{thanh2019error}
	L\^{e}~V\v{a}n Th\`{a}nh and Nguyen~Ngoc Tu, \emph{{Error bounds in normal
			approximation for the squared-length of total spin in the mean field
			classical $N$-vector models}}, Electronic Communications in Probability
	\textbf{24} (2019), Paper no. 16, 12pp.
	
\end{thebibliography}
\end{document}